\newtheorem{theorem}{Theorem}[section]
\newtheorem{lemma}[theorem]{Lemma}
\theoremstyle{definition}
\newtheorem{definition}[theorem]{Definition}
\newtheorem{example}[theorem]{Example}
\theoremstyle{remark}
\newtheorem{remark}[theorem]{Remark}
\numberwithin{equation}{section}
\newcommand\expP[1]{\exp\Bigl(#1\Bigr)}
\newcommand\ceil[1]{\lceil#1\rceil}
\newcommand\floor[1]{\lfloor#1\rfloor}
\newcommand{\set}[2]{\left\{ #1  \;\middle\vert\; #2 \right\}}
\newcommand{\twopiece}[5][0]{
    \ifcase#1
        \left\{\begin{array}{ll}{#2}&{\text{ if } #3}\\{#4}&{\text{ if } #5}\end{array}\right.
    \else
        \left\{\begin{array}{ll}{#2}&{\text{ if } #3}\vspace{#1pt}\\{#4}&{\text{ if } #5}\end{array}\right.
\fi}
\newcommand{\threepiece}[7][0]{ 
    \ifcase#1
        \left\{\begin{array}{ll}{#2}&{\text{ if } #3}\\{#4}&{\text{ if } #5}\\{#6}&{\text{ if } #7}\end{array}\right.
    \else
        \left\{\begin{array}{ll}{#2}&{\text{ if } #3}\vspace{#1pt}\\{#4}&{\text{ if } #5}\vspace{#1pt}\\{#6}&{\text{ if } #7}\end{array}\right.
\fi}
\def\N{\mathbb{N}}
\def\P{\mathbb{P}}
\def\R{\mathbb{R}}
\newcommand{\cC}{{\mathcal C}}
\newcommand{\cD}{{\mathcal D}}
\newcommand{\cF}{{\mathcal F}}
\newcommand{\cI}{{\mathcal I}}
\newcommand{\cJ}{{\mathcal J}}
\newcommand{\cK}{{\mathcal K}}
\newcommand{\cM}{{\mathcal M}}
\newcommand{\cN}{{\mathcal N}}
\newcommand{\eps}{\varepsilon}
\begin{document}

\title[Entropy, Convex Functions]{Entropy of Convex Functions on $\R^d$}


\author{Fuchang Gao}
\address{Department of Mathematics, University of Idaho, Moscow, ID 83844-1103, USA}
\curraddr{}
\email{fuchang@uidaho.edu}
\thanks{Research partially supported by a grant from the Simons Foundation, \#246211.}

\author{Jon A. Wellner}
\address{Department of Statistics, University of Washington, Seattle, WA  98195-4322, USA}
\curraddr{}
\email{jaw@stat.washington.edu}
\thanks{Supported in part by NSF Grants DMS-1104832 and DMS-1566514, and by NI-AID grant 2R01 AI291968-04.}

\subjclass[2010]{Primary 52A41, 41A46; Secondary 52A27,  52C17, 52B11}
\keywords{metric entropy, bracketing entropy, convex functions, polytopes, simplicial approximation}
\date{}

\dedicatory{}

\begin{abstract}
Let $\Omega$ be a bounded closed
convex set in $\R^d$ with non-empty interior, and let $\cC_r(\Omega)$
be the class of convex functions on $\Omega$ with $L^r$-norm bounded by 1.
We obtain sharp estimates of the
$\eps$-entropy of $\cC_r(\Omega)$ under $L^p(\Omega)$ metrics, $1\le p<r\le \infty$.
In particular, the results imply that the universal lower bound $\eps^{-d/2}$ is also an
upper bound for all $d$-polytopes, and the universal upper bound of
$\eps^{-\frac{(d-1)}{2}\cdot \frac{pr}{r-p}}$ for $p>\frac{dr}{d+(d-1)r}$ is attained by the closed
unit ball. While a general convex body can be approximated
by inscribed polytopes, the entropy rate does not carry
over to the limiting body.
Our results have
applications to questions concerning rates of convergence of nonparametric estimators
of high-dimensional shape-constrained functions.
\end{abstract}

\maketitle

\bigskip

\section{Introduction}
Given a set $T$ in a metric space $(X,\rho)$, the $\eps$-covering number of $T$,
denoted by $N(\eps,T, \rho)$, is the minimum number of closed balls of radius
$\eps$ in $(X,\rho)$ needed to cover $T$. It is a measurement of massiveness of
$T$ at a fixed resolution $\eps$. With a varying radius, the covering number quantitatively gauges the geometric complexity of $T$.
Over half a century ago, Kolmogorov and Tihomirov \cite{MR0124720} 
put the study of the logarithm of covering number with varying radius, or metric entropy, at the center stage.
Since then, metric entropy has come to play an increasingly important
role in a wide range of problems in mathematics including approximation theory, probability theory, information theory
and statistics. In particular, it is now widely understood that accurate bounds for metric entropy determine optimal
rates of convergence in estimation problems in statistics;  see, for example,
\cite{MR0334381}, 
\cite{MR722129},  
\cite{MR1742500}, 
and
\cite{MR1240719}. 

The focus in this paper is on metric entropy for various classes of convex functions. In particular we study metric entropy of the classes
$\cC_r (\Omega)$ of all real-valued convex functions $f$ on a closed convex body $\Omega$ in
$\R^d$ having $L^r$-norm bounded by $1$. These convex functions are of special importance not only because they are basic classes of functions, but also because they appear so commonly in applications. For example, exponential functions that frequently appear in statistical density estimation are convex. In these statistical applications, one often also needs to know the so-called bracketing entropy, that is, the logarithm of the minimum number
$N_{[\,]}(\eps, \cC_\infty(\Omega), \|\cdot\|_p)$ of
$\eps$-brackets
$$
[\underline{f},\overline{f}]:=\set{g\in \cC_\infty(\Omega)}{\underline{f}\le g\le \overline{f}}, \ \|\overline{f}-\underline{f}\|_p\le \eps
$$
needed to cover $\cC_\infty(\Omega)$. It is known and easy to see that
$$
N(\eps,\cC_\infty(\Omega), \|\cdot\|_p)\le N_{[\,]}(2\eps,\cC_\infty(\Omega), \|\cdot\|_p).
$$

Before we present our results, let us briefly review the history of metric entropy
bounds for convex functions and several recent uses of such bounds:

For the class $\cC$ of all compact convex subsets of a fixed bounded subset $\Omega$
of $\R^d$ endowed with the Hausdorff metric $h$,
Bronshtein \cite{MR0415155}
obtained both upper and lower bounds of the order $\eps^{- (d-1)/2}$
for the metric entropy $\log N(\eps , \cC,  h)$. In the same paper, Bronshtein
also obtained bounds of the order $\eps^{-d/2}$  for $\log N(\eps, \cF, \| \cdot \|_{\infty} )$ where
$\cF$ is the class of all convex functions $f$ defined on a fixed convex body $\Omega$ in $\R^d$
satisfying a (uniform) Lipschitz condition:  $| f(y) - f(x) | \le L \| y - x \|$ for all $x,y \in \Omega$; here
$\| \cdot \|_{\infty}$ denotes the supremum norm.
These bounds improved  earlier results of
Dudley \cite{MR0358168}, and are incorporated in
\cite{MR876079, MR1720712, MR3445285}.

In the case $d=1$,
Gao \cite{MR2386068} 
removed the requirement of uniform Lipschitz condition, and obtained sharp bounds for $\log N(\eps, \cC_{\infty}([a,b]), L_2 )$,
and, in fact, provided upper and lower bounds of the order $\eps^{-1/k}$  for the ``$k$-monotone'' classes
$$
\cM_k ( [a,b]):= \{ f : [a,b] \rightarrow [-1,1] \big | \ (-1)^{i} f^{(i)} (x) \ge 0, f(a)=0\ 1 \le i \le k, \ x \in [a,b]\}.
$$
When $k=2$, it reduces to the convex case. These results were further extended for
$\log N(\eps, \cM_k ([a,b]), L_p )$ and $\log N_{[\,]}(\eps, \cM_k ( [a,b]), L_p )$ in
Gao and Wellner \cite{MR2520591}. The results for $\log N(\eps, \cC_{\infty}([a,b]), L_p)$
were also obtained independently by Dryanov \cite{MR2519658}.  

For the case $d>1$,
Guntuboyina and Sen \cite{MR3043776}
extended the result for $\log N(\eps, \cC_{\infty}([a,b]^d), L_p)$. More recently,
Guntuboyina \cite{MR3474567}  
relaxed the restriction on the uniform norm by considering classes $\cC_r ([a,b]^d)$.
He showed that the metric entropy $\log N(\eps, \cC_r ([a,b]^d), L_p)$ are infinite for $p\ge r$
(since these classes are not precompact in $L_p$ with $p\ge r$), and are of the
order $\eps^{-d/2}$ if $p < r$.

In this paper, we study the metric entropy of $\cC_r(\Omega)$ under $L^p$-norm, $1\le p<r$, for
all compact convex sets $\Omega$ in $\R^d$ with non-empty interior.
It turns out that the growth rate of the metric entropy of $\cC_r(\Omega)$ heavily depends on the shape of the
convex domain $\Omega$. This heavy dependence on shapes makes the problem
both more interesting and challenging.

Now, we turn to the statements of our results.

We first show that $\eps^{-d/2}$ is the general lower bound for the metric entropy rate, and if $\Omega$
is a closed convex polytope, then $\eps^{-d/2}$ is also the upper bound.
More precisely, we will prove the following theorem. Throughout the rest of the paper, $|\Omega|$ stands for the Lebesgue measure of $\Omega$.

\begin{theorem}\label{thm1}
Let $\Omega$ be a
compact
convex set in $\R^d$ with non-empty interior.

(i) There exists a constant
$c_1$ depending only on $d$ such that for all $\eps>0$,
\begin{align*}
\log N (\eps, \cC_r(\Omega), \| \cdot \|_p ) \ge c_1|\Omega|^{\frac{d}{2p}-\frac{d}{2r}}\eps^{-d/2}.
\end{align*}

(ii) If $\Omega$ can be triangulated into $m$ simplices of dimension $d$,
then for any $1 \le p < r$,
there exists a constant
$C_1$ depending on $p, d, r$,
such that for any $\eps>0$,
\begin{align*}
&\log N (\eps, \cC_r(\Omega), \| \cdot \|_p)
\le C_1 m |\Omega|^{\frac{d}{2p}-\frac{d}{2r}}\eps^{-d/2}.
\end{align*}
Consequently, if $\Omega$ is a convex polytope with $v$ extreme points,
then we can choose $m=O(v^{\ceil{\frac{d}2}})$.
When $r=\infty$, the same inequality holds for bracketing entropy.
\end{theorem}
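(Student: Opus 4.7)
\emph{Part (i) (lower bound).} My plan is to produce, inside $\Omega$, an explicit packing of $\cC_r(\Omega)$ in the $L^p$ metric. After an affine dilation reducing to $|\Omega|=1$ (which yields the stated factor $|\Omega|^{d/(2p)-d/(2r)}$ by tracking how $\eps$ transforms under dilation when the $L^r$-unit normalization is kept), the interior of $\Omega$ contains a cube $Q$ of side $\rho$ bounded below by a $d$-dependent constant. Fix a strictly convex quadratic base $g_0(x)=c\,|x-x_0|^2$ on $Q$ with a suitably large $c$, partition $Q$ into $n^d$ congruent subcubes of side $h=\rho/n$, and in each place a smooth bump $\phi_\alpha$ with $\|\phi_\alpha\|_\infty\asymp h^2$, $\|\phi_\alpha\|_p\asymp h^{2+d/p}$, and Hessian of operator norm at most $1$. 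For each sign vector $\sigma\in\{-1,+1\}^{n^d}$ form
\[
g_\sigma := g_0 + \eta\sum_\alpha \sigma_\alpha\phi_\alpha,
\]
which is convex on $Q$ for a small absolute constant $\eta$ since the Hessian of $g_0$ dominates the perturbations. Extending $g_\sigma$ convexly to $\Omega$ (e.g.\ by its convex envelope) and normalizing to unit $L^r$-norm places a rescaled family inside $\cC_r(\Omega)$. A disjoint-support computation gives $\|g_\sigma-g_{\sigma'}\|_p \gtrsim \eta h^2$ whenever a fixed fraction of signs differ, and the Varshamov--Gilbert bound produces $\exp(c n^d)$ such well-separated sign vectors. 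Choosing $h\asymp \eps^{1/2}$ (equivalently $n^d\asymp \eps^{-d/2}$) matches this separation to $\eps$ and proves the lower bound.

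\emph{Part (ii) (upper bound).} By the triangulation hypothesis and subadditivity of covering/bracketing numbers across a finite partition, it suffices to bound the entropy on a single simplex and multiply by $m$, with the volume factor absorbed in the normalization. On each simplex, after affinely mapping to a standard simplex, I would approximate $f\in\cC_r(S)$ by a piecewise-linear convex function on a fine subtriangulation. The geometric core is Bronshtein-type simplicial approximation of convex bodies in $\R^{d+1}$: applied to the epigraph of a sup-norm-bounded $f$, one obtains approximations at scale $\eta$ using $\exp(C\eta^{-d/2})$ candidates in sup norm (hence in $L^p$), and by taking both inscribed and circumscribed polytopes of the epigraph one simultaneously produces upper/lower brackets. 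When $r=\infty$ this finishes the argument. For $r<\infty$, where $f$ can blow up near $\partial S$, I would introduce a nested family of inner simplices $S=S_0\supset S_1\supset\cdots$ retreating from the boundary at geometric rate, use the $L^r$-bound to control $\sup_{S_k}|f|$ on each layer, and apply the Bronshtein construction on each annulus at an adaptive scale; the resulting log-cardinalities sum geometrically to $O(\eps^{-d/2})$. The bound $m=O(v^{\lceil d/2\rceil})$ for polytopes with $v$ extreme points follows from the Upper Bound Theorem on triangulations.

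\emph{Main difficulty.} The principal obstacle is the finite-$r$ case: the onion-peel scales $\delta_k$ and approximation scales $\eta_k$ must be coordinated so that the total entropy contribution from the geometric series of annular layers is exactly $O(\eps^{-d/2})$, with no logarithmic losses, while still respecting the $L^p$ budget. A secondary issue is maintaining global convexity across boundaries of simplices in the assembled covering, which can be enforced by working with upper envelopes of tangent hyperplanes so that the approximants are automatically convex on all of $\Omega$.
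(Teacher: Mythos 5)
Your Part (i) is essentially the paper's argument: both build a strictly convex quadratic base, perturb it on an $\eps\times\cdots\times\eps$ grid by bumps whose Hessians are dominated by that of the base so that convexity survives, and then extract an exponentially large well-separated subfamily (your Varshamov--Gilbert step is the paper's pigeonhole count of Hamming balls). The only inessential differences are $\{\pm1\}$ vs.\ $\{0,1\}$ sign vectors and your convex-envelope extension to $\Omega$, which the paper avoids by defining $G(x;\xi)=\frac1d(\lVert x\rVert^2 - F(x;\xi))$ globally; one small care point is that you need the affine normalization to put $\Omega$ inside a unit box with $|\Omega|\gtrsim_d 1$ (not merely $|\Omega|=1$) in order to guarantee the interior cube $Q$, exactly as the paper's boxing lemma does.

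Part (ii), however, has a genuine gap, and in fact two. First, the claim that ``for $r=\infty$ this finishes the argument'' is not correct: Bronshtein's theorem controls the \emph{Hausdorff} distance between epigraphs, and converting this to a sup-norm (or even $L^p$-norm) bound on the functions requires a uniform Lipschitz bound, which convex functions bounded by $1$ on a simplex do \emph{not} possess near $\partial S$. This is exactly why the paper introduces the pared set $\Omega_\delta$ and its Lemmas on uniform Lipschitz control and on the paring; the $r=\infty$ case already requires the boundary machinery. Second, for $r<\infty$ your annuli $S_k\setminus S_{k+1}$ are not convex sets, so neither Bronshtein's theorem nor the Lipschitz covering lemma applies to them directly; one must first decompose each annulus into convex pieces. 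The paper does this by coning each facet to an interior point (obtaining ``wedges'' $\Omega_i=V_i\setminus D_0^\circ$, each a polytope with at most one more facet), rescaling each wedge back to $[0,1]^d$, and running a self-similar recursion $g(k,\eps)\le M\eps^{-d/2}+k\,g(k+1,K\eps)$ in the facet count $k$, choosing $\delta$ so that $K^{d/2}=2k$ to make the series $\sum_j(k+j)^\gamma 2^{-j}$ converge. This rescaling recursion, rather than a fixed geometric onion-peel with ``adaptive scales,'' is what produces the clean $C_1 m\,|\Omega|^{d/2p-d/2r}\eps^{-d/2}$ bound with no logarithmic loss. Your sketch asserts that the annular contributions ``sum geometrically to $O(\eps^{-d/2})$,'' but without the convex decomposition of each shell and the facet-count recursion (or an equally careful substitute), that assertion is the hard part of the theorem rather than a routine verification.
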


In view of the fact that a general compact convex set
can be approximated by convex sets
with finitely many extreme points, one might guess that the rate $\eps^{-d/2}$ holds for
general compact convex sets in $\R^d$ with non-empty interior.
This, however, is not the case. This is because the upper bound increases as
$m$ increases. This dependence on $m$ is important.
It enables us to establish upper bounds for general bounded
convex sets. For that, we need the following definition.
\begin{definition}
Let $\Omega$ be a bounded closed convex set in $\R^d$ with non-empty interior.
A sequence of (non-degenerate) $d$-simplices $\cD=\{ D_1, D_2, \ldots \}$ is called a
{\sl simplicial approximation sequence} for $\Omega$ if $D_i \subset \Omega$
for all $i\in \N$ and $D_i^\circ \cap D_j^\circ =\emptyset$ for all $i \not= j $ (where
$D^\circ$ denotes the interior of $D$).
For $t \in (0,1)$
we define
$$
S_{\cD}(t, \Omega)=
\min \{ j\in \N : \ | \Omega \setminus \cup_{i \le j} D_i | \le t | \Omega | \},
$$
and we call $S_{\cD} (t, \Omega)$ the {\sl simplicial approximation number of $\Omega$ according to $\cD$}.
\end{definition}

\begin{example}\label{example}
As an example to illustrate simplicial approximation sequences and simplicial approximation,
we consider the case when $\Omega$ is the closed unit disk in $\R^2$.
We choose $D_1$ as an inscribed equilateral triangle. For each edge of $D_1$,
we build an isosceles triangle with apex on the short arc opposite to the edge.
Denote these three isosceles triangles by $D_2, D_3, D_4$. The union of $D_1,\cdots D_4$
is a regular hexagon inscribed in the disk. Now, on each edge of the hexagon, we build an isosceles
triangle with apex on the short arc opposite to the edge, and denote these six isosceles triangles
by $D_5, D_6, \ldots, D_{10}$. The union of $D_1, D_2,\ldots, D_{10}$ is a regular 12-gon inscribed in the disk
(see Figure 1). Continuing this process, we obtain a simplicial approximation sequence,
$\cD=\{D_1,D_2,D_3,\ldots\}$. It is not difficult to see that for all
$n=0, 1,\ldots$ and $k=1, 2,\ldots, 3\cdot 2^n$,
$S_{\cD}(t, \Omega)=1$ if $1-\frac{3}{2\pi}\sin\frac{2\pi}{3}\le t\le 1$, and
$$
S_{\cD}(t, \Omega)=3\cdot 2^{n}-2+k, \qquad \mbox{for} \ \  a_n-k b_n\le t< a_n-(k-1)b_n,
$$
where
$$
a_n=1-\frac{3\cdot 2^{n-1}}{\pi}\sin\frac{\pi}{3\cdot 2^{n-1}}, \ \ \   b_n=\frac1{\pi}\left(1-\cos\frac{\pi}{3\cdot 2^n}\right)\sin\frac{\pi}{3\cdot 2^{n}},
$$
from which we conclude that $S_{\cD}(t,\Omega)=O(t^{-1/2})$.
\begin{figure}[h!]
\begin{center}
\begin{minipage}{4in}
  \centerline{\includegraphics[height=2in]{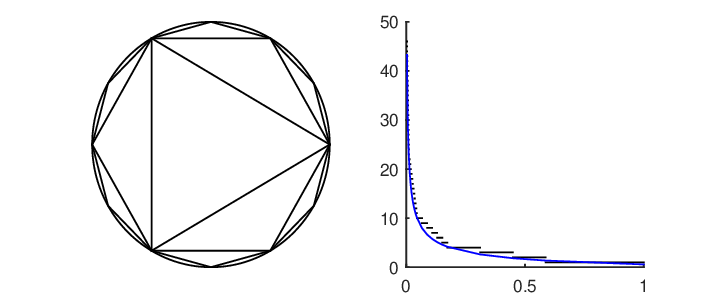}}
  \caption[fontsize=small]{Left: A simplicial approximation sequence for the unit
  disk in $\R^2$. Right: The graph of the function $S_{\cD}(t,\Omega)$ in black,
  compared with that of the function $f(t)=\frac{2\pi}{\sqrt{6}}t^{-1/2} -2$ in blue.}
\end{minipage}
\end{center}
\end{figure}
\end{example}

Now we can state the following theorem.
\begin{theorem}\label{thm2}
Let $\Omega$ be a
compact
convex set in $\R^d$ with non-empty interior.
Let $\cC_r(\Omega)$ be the set of convex functions on $\Omega$ whose
$L^r(\Omega)$-norms are bounded by $1$.
Then, there exists a constant $C$ depending only on $d$, $p$ and $r$, such that for any $0<\eps<1$, and any simplicial approximation sequence $\cD$,
\begin{eqnarray*}
\log N(\eps |\Omega|^{\frac1p-\frac1r},\cC_r(\Omega), \|\cdot\|_p)
& \le & C\int_{\delta(\eps)}^{1} \frac{S_{\cD}(t,\Omega)}{t}dt \\
&& \ \ \ + \ C\left(\int_{\delta(\eps)}^{1} \left(\frac{S_{\cD}(t,\Omega)}{t}\right)^{\beta}dt\right)^{1/\beta}\cdot\eps^{-d/2}
\end{eqnarray*}
where $\delta(\eps)=2^{-2-\frac{r}{r-p}}\eps^{\frac{rp}{r-p}}$
and $\beta=\frac{2pr}{2pr+(r-p)d}$. When $r=\infty$, the same inequality holds for bracketing entropy.
\end{theorem}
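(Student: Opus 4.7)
The plan is a multi-scale argument that applies Theorem~\ref{thm1}(ii) simplex-by-simplex along nested admissible simplicial approximations of $\Omega$, uses H\"older's inequality to exploit the global $L^r$-constraint, and balances per-simplex error budgets via Lagrange multipliers.

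I would begin by fixing $\eps > 0$ and setting $\delta := \delta(\eps)$ and $K := \lceil \log_2(1/\delta)\rceil$. For each $k \in \{0,\ldots,K\}$, pick an admissible sequence whose first $m_k := S(2^{-k},\Omega)$ members cover a set $\Omega_k$ with $|\Omega\setminus\Omega_k|\le 2^{-k}|\Omega|$; then take successive unions and retriangulate to force $\Omega_0\subset\cdots\subset\Omega_K$, built from $\tilde m_k = O(\sum_{j\le k}m_j)$ simplices, with dyadic annuli $A_k := \Omega_k\setminus\Omega_{k-1}$. On the uncovered tail, H\"older gives $\|f\|_{L^p(\Omega\setminus\Omega_K)}\le (2^{-K}|\Omega|)^{1/p-1/r}\le \tfrac14\eps|\Omega|^{1/p-1/r}$ by the choice of $\delta$, so replacing $f$ by $0$ there costs a negligible part of the budget. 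On each simplex $D\subset\cup_k A_k$, write $a_D := \|f\|_{L^r(D)}$ and note $\sum_D a_D^r \le 1$. Applying Theorem~\ref{thm1}(ii) to the single simplex $D$ with the scaled norm bound $a_D$ gives a local cover of log-cardinality $\le C\, a_D^{d/2}|D|^\alpha \eps_D^{-d/2}$, where $\alpha := d/(2p)-d/(2r)$ and $\eps_D$ is a per-simplex $L^p(D)$-error to be chosen; products of these covers yield a global $L^p(\Omega)$-cover with error $(\sum_D \eps_D^p)^{1/p}$.

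To extract the claimed rate, I would apply H\"older with conjugate exponents $r/(d/2)$ and $r/(r-d/2)$ to bound $\sum_D a_D^{d/2}\bigl(|D|^\alpha\eps_D^{-d/2}\bigr)$ using $\sum a_D^r\le 1$, then minimize over $\eps_D$ subject to $\sum\eps_D^p\le \bigl(\tfrac12\eps|\Omega|^{1/p-1/r}\bigr)^p$ via Lagrange multipliers. The optimum is $\eps_D \propto |D|^{2\alpha r/(2pr+(r-p)d)}$, and the combined exponents collapse precisely to $1/\beta$ with $\beta = 2pr/(2pr+(r-p)d) = 1/(1+\alpha)$, yielding
\[
\log N \;\le\; C\eps^{-d/2}|\Omega|^{-\alpha}\Bigl(\sum_D |D|^{\alpha\beta}\Bigr)^{1/\beta}.
\]
Since $\alpha\beta = 1-\beta < 1$, concavity of $x\mapsto x^{\alpha\beta}$ gives $\sum_{D\subset A_k} |D|^{\alpha\beta}\le \tilde m_k^{\beta}(2^{-(k-1)}|\Omega|)^{1-\beta}$; summing over $k$ and converting the dyadic sum to a Riemann integral of $(S(t,\Omega)/t)^\beta$ over $[\delta,1]$ (the factor $|\Omega|^\alpha$ cancels against the prefactor) delivers the second term of the theorem. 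The first term $C\int_\delta^1 S(t,\Omega)/t\,dt$ arises from the overhead of discretizing the continuous parameters $(a_D,\eps_D)$ so that the cover is honestly finite: each scale contributes $O(\tilde m_k)$ bits, and $\sum_k \tilde m_k \asymp \int_\delta^1 S(t,\Omega)/t\,dt$.

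The main obstacle I anticipate is coordinating this two-step H\"older/Lagrange balancing so that the exponent $\beta$ emerges exactly, while at the same time discretizing $(a_D,\eps_D)$ at a resolution cheap enough that the enumeration overhead matches the first integral $\int S(t,\Omega)/t\,dt$ rather than something larger. Secondary technical nuisances are the retriangulation needed to make the $\Omega_k$ nest (which can inflate $\tilde m_k$ by a dimension-dependent constant) and the case distinction when $\alpha\beta \ge 1$ or $r\le d/2$, handled by the monotone bound $\sum |D|^{\alpha\beta}\le (\sum|D|)^{\alpha\beta}$ or an alternative choice of H\"older exponents.
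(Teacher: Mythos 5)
Your proposal is essentially the paper's argument: a dyadic simplicial approximation of $\Omega$ truncated at the scale $\delta(\eps)$, Hölder on the remaining tail, the polytope estimate of Theorem~\ref{thm1}(ii) applied simplex-by-simplex, and an optimal allocation of the per-simplex $L^p$-budget balanced against the unknown per-simplex $L^r$-mass. The paper packages your Hölder/Lagrange step and the $(a_D)$-discretization overhead into the disjoint-union Lemma~\ref{union} (inequality (\ref{disjoint-union}), with its $4^k$ combinatorial factor giving the first integral exactly as you anticipate), and it always uses Hölder with exponents $r/p,\ r/(r-p)$ rather than $2r/d,\ 2r/(2r-d)$; equivalently, minimizing over $\eps_D$ first and applying Hölder afterwards yields conjugate exponents $\frac{(2p+d)r}{dp}$ and $\frac{(2p+d)r}{(2p+d)r-dp}$, which are admissible for all $1\le p<r$, so the $r\le d/2$ case you flag does not actually require a separate argument.
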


By specifically constructing a simplicial approximation for the ball, we show that Theorem~\ref{thm2} implies the following theorem. Our proof of the theorem also provides a general scheme of constructing simplicial approximations for a given convex set.
\begin{theorem}\label{thm3}
If $\Omega$ is a compact convex set contained in the closed unit ball in $\R^d$, then there exists a
constant $C$ depending only on $r$, $p$ and $d$ such that for all $1\le p<r\le \infty$ and all $0<\eps<1$,
$$
\log N(\eps,\cC_r(\Omega), \|\cdot\|_p)
\le
\threepiece{C\eps^{-\frac{(d-1)}{2}\cdot \frac{pr}{r-p}}}{p>\frac{dr}{d+(d-1)r}}
{C\eps^{-d/2}\left|\log \eps\right|^{1+\frac{(r-p)d}{2pr}}}{p=\frac{dr}{d+(d-1)r}}
{C\eps^{-d/2}}{p<\frac{dr}{d+(d-1)r}}.
$$
When $r=\infty$, the same inequality holds for bracketing entropy.
\end{theorem}

The following theorem implies the sharpness of Theorem~\ref{thm3}
at least for the case when $\Omega$ is the closed unit ball in $\R^d$, $r=\infty$
and $p\ne \frac{d}{d-1}$.
\begin{theorem}\label{thm4}
If $\Omega$ is the closed unit ball in $\R^d$, then there exists a constant
$c_2$ dependent only on $d$ and $p$
such that for all $0<\eps < 1$,
$$
\log N(\eps, \cC_\infty(\Omega), \|\cdot\|_p) \ge c_2 \eps^{-\gamma},
$$
where $\gamma=\max\{(d-1)p/2, d/2\}$.
\end{theorem}

\begin{remark}
Because Theorem~\ref{thm3} is built upon Theorem~\ref{thm2}, which is again based
on Theorem~\ref{thm1} (ii), Theorem~\ref{thm4} indicates that in some cases, the linear
dependence on $m$ in the upper bound in Theorem~\ref{thm1} (ii) is optimal.
A more concrete example is the regular $(m+2)$-gon in $\R^2$. By the end of this paper,
we will show that for this $\Omega$, there exists a constant $c_2$ such that for $0<\eps\le \frac14 m^{-2}$,
\begin{align}
\log N (\eps, \cC_r(\Omega), \| \cdot \|_p)
\ge c_2 m \eps^{-1}.\label{linear}
\end{align}
\end{remark}
In general, however, the lower bound should depend on the geometry of the set $\Omega$,
and cannot be simply captured by the minimum number of $d$-simplices required in a triangulation.
\section{Proofs}
\subsection{Scaling}
In this subsection, we prove two lemmas, through which we can reduce a problem
on an arbitrary closed convex set with non-empty interior to a problem on a closed
convex set contained in $[0,1]^d$ with volume at least $1/d!$.

\begin{lemma}[Boxing a Convex Set]\label{box}
Every compact convex set $\Omega$ in $\R^d$ with a non-empty interior
can be enclosed in a closed rectangular box of volume $d!|\Omega|$,
and contains a convex polytope of at most $2d$ vertices and volume at least $|\Omega|/d!$,
where $|\Omega|$ stands for the Lebesgue measure of $\Omega$.
\end{lemma}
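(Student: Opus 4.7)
The plan is to produce the bounding box and the inscribed polytope by a single construction, exploiting the elementary identity that a rectangular box and the cross-polytope whose $2d$ vertices are its axial extreme points (from any interior reference point) have volumes in the exact ratio $d! : 1$.

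First I would fix any interior point $p_0 \in \Omega$ and any orthonormal basis $e_1, \dots, e_d$ of $\R^d$, and define the one-sided widths from $p_0$ by
\[
w_i^+ = \max_{x \in \Omega}\, e_i \cdot (x - p_0), \qquad w_i^- = \max_{x \in \Omega}\, -e_i \cdot (x - p_0),
\]
for $1 \le i \le d$; each is strictly positive because $p_0$ is interior and finite because $\Omega$ is compact. Set
\[
B = p_0 + \prod_{i=1}^d [-w_i^-, w_i^+], \qquad Q = \mathrm{conv}\bigl\{ p_0 + w_i^+ e_i,\ p_0 - w_i^- e_i : 1 \le i \le d \bigr\}.
\]
By the definition of $w_i^\pm$, every $x \in \Omega$ satisfies $e_i \cdot (x - p_0) \in [-w_i^-, w_i^+]$, so $\Omega \subseteq B$; and each of the $2d$ vertices of $Q$ lies in $\Omega$, so by convexity $Q \subseteq \Omega$.

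The heart of the argument is the identity $|Q| = |B|/d!$. I would establish it by decomposing $Q$ into the $2^d$ simplices sharing the apex $p_0$, one for each sign pattern $(\epsilon_1, \dots, \epsilon_d) \in \{+,-\}^d$, whose remaining $d$ vertices are $p_0 + \epsilon_i w_i^{\epsilon_i} e_i$; these are precisely the intersections of $Q$ with the $2^d$ coordinate orthants centered at $p_0$, so they tile $Q$ with pairwise disjoint interiors. Each such simplex has volume $(d!)^{-1} \prod_i w_i^{\epsilon_i}$, and summing over all $2^d$ sign patterns gives
\[
|Q| = \frac{1}{d!} \prod_{i=1}^d \bigl( w_i^+ + w_i^- \bigr) = \frac{|B|}{d!}.
\]
Combining this with the inclusions $Q \subseteq \Omega \subseteq B$ then yields $|B| = d!\,|Q| \le d!\,|\Omega|$ and $|Q| = |B|/d! \ge |\Omega|/d!$, which is exactly the lemma; $Q$ is a polytope with at most $2d$ vertices by construction.

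No step is a serious obstacle --- the point warranting care is the simplicial tiling underlying the volume identity, which reduces to observing that $p_0$ lies in the interior of $Q$ and that the hyperplanes $\{e_i \cdot (x - p_0) = 0\}$ partition $Q$ into exactly the $2^d$ orthant simplices described above.
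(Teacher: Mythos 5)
The proposal has a fatal gap: the claim that ``each of the $2d$ vertices of $Q$ lies in $\Omega$'' is simply false. The quantity $w_i^+ = \max_{x\in\Omega} e_i\cdot(x-p_0)$ is achieved at some $x^*\in\Omega$, but that maximizer generally has nonzero components in the other coordinate directions, and there is no reason for the point $p_0 + w_i^+ e_i$ itself to lie in $\Omega$. A concrete counterexample already appears for the standard simplex $\Omega=\mathrm{conv}\{(0,0),(1,0),(0,1)\}$ in $\R^2$: with $p_0=(1/4,1/4)$ and the standard basis, $w_1^+=3/4$, yet $p_0+w_1^+e_1=(1,1/4)\notin\Omega$ since $1+1/4>1$. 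A thin rectangle rotated $45^\circ$ from the axes shows the failure is not cosmetic: there the cross-polytope $Q$ sticks far outside $\Omega$, and indeed the axis-aligned bounding box has volume that exceeds $d!\,|\Omega|$ by an arbitrarily large factor, so the conclusion $|B|\le d!\,|\Omega|$ is genuinely false for an arbitrary choice of $p_0$ and orthonormal frame.

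This is not a repairable local slip. If one instead takes the actual maximizers $a_i,b_i\in\Omega$ as the $2d$ vertices (which does give a polytope inside $\Omega$), the clean orthant decomposition no longer applies and the volume can even collapse to zero (e.g.\ in a thin slanted parallelogram the same pair of opposite corners can simultaneously maximize and minimize both coordinates). What the lemma actually requires is a careful, data-dependent choice of directions, and this is exactly what the paper supplies: it inducts on dimension, at each step taking the diameter direction of the current body, projecting orthogonally, and then lifting the inductively constructed polytope back into $\Omega$ while comparing volumes via a cylindrical-coordinate/area argument over directions perpendicular to the diameter. Your cross-polytope/box volume identity $|Q|=|B|/d!$ is correct and pretty, but the containment $Q\subseteq\Omega$ that the whole argument hinges on does not hold without the kind of careful frame selection the paper performs.
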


\begin{proof}
We use induction on $d$ to show that we can find positive
numbers $h_1, h_2, \ldots, h_d$ such that $\Omega$ is contained in a
rectangular box of size $h_1\times h_2\times \cdots\times h_d$, and contains a
convex polytope of at most $2d$ vertices with volume at least $\frac1{d!}\cdot h_1\times h_2\times \cdots\times h_d$.

The statement is trivial if $d=1$. Suppose the statement is true for $d=k$.
Consider the case $d=k+1$. Let $h_{k+1}={\rm diam}(\Omega)$.
Choose $x,y\in \Omega$ so that $\|x-y\|_2=h_{k+1}$. Let $P_x^\perp(\Omega)$
be the projection of $\Omega$ onto the affine hyperplane that contains
$x$ and is orthogonal to $x-y$. Since $P_x^\perp (\Omega)\subset \R^k$ is a
$k$-dimensional compact convex set with non-empty interior, by the induction
hypothesis, we can find positive numbers $h_1, h_2, \ldots, h_k$ such that
$P_x^\perp(\Omega)$ is contained in a rectangular box $R_k$ of size $h_1\times h_2\times \cdots\times h_d$,
and contains a convex polytope $T_k$ of at most $2k$ vertices with volume at least
$\frac1{k!}\cdot  h_1\times h_2\times \cdots\times h_k$.
If we let $[x,y]$ be the line segment between $x$ and $y$, then $\Omega$ is
clearly contained in the rectangular box $R_k\times [x,y]$ of
size $h_1\times h_2\times \cdots\times h_{k+1}$.

To show that $\Omega$ contains a convex polytope of at most $2(k+1)$ vertices with volume at least
$\frac{1}{(k+1)!}\cdot h_1\times h_2\times \cdots\times h_{k+1}$, we let
$u_1,u_2,\ldots u_{m}$, $m\le 2k$, be the vertices of the convex polytope $T_k$.
Clearly, the convex hull $U$ of $\{x,y,u_1,u_2,\ldots, u_m\}$ has volume
$|U|\ge \frac{1}{(k+1)!}\cdot h_1\times h_2\times \cdots\times h_{k+1}$.

For each $1\le i\le m$, there exists $z_i\in \Omega$ such that $P_x^\perp z_i=u_i$.
Because $\Omega$ is convex, it contains the convex hull of
$\{x,y, z_1,z_2,\cdots, z_{m}\}$. Denote this convex hull by $T_{k+1}$. Then $T_{k+1}$ has at most $2(k+1)$ vertices.
Note that the volume of $T_{k+1}$ is at least as large as $|U|$. Indeed, for any
unit vector $u$ perpendicular to $x-y$, consider the half-line in the direction of $u$
starting from $x$. Suppose the half-line intersects the boundary $U$ at $w(u)$.
Choose $z(u)\in T_{k+1}$ such that $P_x^\perp z(u)=w(u)$. Clearly, the area of
$\triangle xyz(u)$ is the same as that of $\triangle xyw(u)$, which equals
$\frac12 h_{h+1}\|x-w(u)\|_2$.
Let $\sigma_{k-1}$ be the $(k-1)$-dimensional spherical measure on the $(k-1)$-dimensional unit sphere $S^{k-1}$.
By using a cylindrical system to compute the volume of $T_{k+1}$, we have
$$
|T_{k+1}|\ge \int_{S^{k-1}}{\rm area }(\triangle xyz(u))\,d\sigma_{k-1}(u)
=\int_{S^{k-1}}{\rm area }(\triangle xyw(u))\,d\sigma_{k-1}(u)=|U|.
$$
Hence, $|T_{k+1}|\ge \frac{1}{(k+1)!}\cdot h_1\times h_2\times \cdots\times h_{k+1}$.
This proves the case $d=k+1$, and thus the statement at the beginning of the proof,
which implies that the volume of $\Omega$ is at least $\frac1{d!}$ of that of the rectangular box.
Hence, the volume of the rectangular box is bounded by $d!|\Omega|$.
\end{proof}

\begin{lemma}[Scaling]\label{scaling}
Let $\Omega$ be a bounded closed convex set contained in a closed rectangular box
$R$ with volume $|R|$ , and let $T$ be any affine transform that maps $R$ onto $[0,1]^d$.
Then for all
$1\le p<r<\infty$ and $\eps>0$,
\begin{align}
N(\eps, \cC_r(\Omega), \| \cdot \|_{L^p (\Omega)} )
= N( |R|^{\frac1r-\frac1p} \eps , \cC_r(T(\Omega)), \|\cdot\|_{L^p(T(\Omega))}) .
\label{ScalingIdentity}
\end{align}
Similarly, for all {$1\le p<\infty$ and $\eps>0$},
\begin{align}
N_{[\, ]} (\eps, \cC_\infty(\Omega), \| \cdot \|_{L^p (\Omega)} )
= N_{[\, ]} ( |R|^{-\frac1p} \eps , \cC_\infty(T(\Omega)), \|\cdot\|_{L^p(T(\Omega))}).
\label{ScalingIdentity2}
\end{align}
\end{lemma}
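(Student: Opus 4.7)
The plan is to exhibit an affine-pullback bijection between the function classes on $\Omega$ and on $T(\Omega)$, chosen so as to preserve the relevant constraint ($L^r$-norm bounded by $1$, or $L^\infty$-norm bounded by $1$), and then read off how $L^p$-distances transform under it. Since $T$ is affine with Jacobian determinant $|\det DT| = 1/|R|$ (it carries $R$ of volume $|R|$ to the unit cube), composition with $T^{-1}$ preserves convexity, and the pullback transforms $L^q$-norms in a volume-covariant fashion; the whole lemma should then reduce to a one-line change of variables.

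For \eqref{ScalingIdentity}, I would define
$$
(\Phi f)(y) = |R|^{1/r}\, f(T^{-1}(y)), \qquad y \in T(\Omega).
$$
The scale factor $|R|^{1/r}$ is the unique choice that makes $\Phi$ a bijection $\cC_r(\Omega) \to \cC_r(T(\Omega))$: change of variables gives $\|\Phi f\|_{L^r(T(\Omega))}^r = |R|\cdot|R|^{-1}\|f\|_{L^r(\Omega)}^r = \|f\|_{L^r(\Omega)}^r$. Applying the same substitution to the $L^p$-norm of a difference yields
$$
\|\Phi f_1 - \Phi f_2\|_{L^p(T(\Omega))} = |R|^{\frac1r - \frac1p}\|f_1 - f_2\|_{L^p(\Omega)}.
$$
Consequently $\Phi$ sends every $\eps$-cover of $\cC_r(\Omega)$ in $\|\cdot\|_{L^p(\Omega)}$ bijectively to an $|R|^{1/r - 1/p}\eps$-cover of $\cC_r(T(\Omega))$ in $\|\cdot\|_{L^p(T(\Omega))}$, and conversely via $\Phi^{-1}$, giving \eqref{ScalingIdentity}.

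For \eqref{ScalingIdentity2}, the $L^\infty$-constraint is already invariant under pure composition with $T^{-1}$, so I would instead use the bijection $\Psi f = f \circ T^{-1}$ (formally the $r = \infty$ case, with $|R|^{1/r}=1$). Because $\Psi$ is monotone, it sends $\cC_\infty(\Omega)$-brackets to $\cC_\infty(T(\Omega))$-brackets, and the same change of variables yields $\|\Psi\overline{f} - \Psi\underline{f}\|_{L^p(T(\Omega))} = |R|^{-1/p}\|\overline{f} - \underline{f}\|_{L^p(\Omega)}$, which gives \eqref{ScalingIdentity2}.

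There is no genuine obstacle here: the whole statement is a bookkeeping exercise with the Jacobian. The only thing that needs care is the choice of the scale factor (namely $|R|^{1/r}$ for covering, $1$ for bracketing), which is forced by requiring the relevant $L^r$- or $L^\infty$-constraint to be preserved by the pullback while convexity is automatically inherited from the affine structure of $T$.
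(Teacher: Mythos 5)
Your proof is correct and follows essentially the same approach as the paper: a change of variables under the affine map $T$ (whose Jacobian determinant is $|R|^{-1}$), with the scale factor $|R|^{1/r}$ (respectively $1$) chosen so that the $L^r$- (respectively $L^\infty$-) constraint is preserved. You package the argument as a single bijection $\Phi$ (respectively $\Psi$) that rescales the $L^p$-metric by a constant factor, which is a slightly more streamlined way of stating the paper's argument of transferring a net (or a family of brackets) across $T$ and proving the two opposite inequalities separately; the underlying computation is identical.
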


\begin{proof}
Let $f \in {\cC}_r (T(\Omega))$.  Then $| R |^{-1/r} f \circ T \in {\cC}_r (\Omega)$ since
$$
\int_{\Omega} |f|^r \circ T d \lambda
= \int_{T(\Omega)} | f |^r | T^{-1} | d \lambda = | R | \int_{T(\Omega)}  | f |^r d \lambda \le  | R |,
$$
where $\lambda$ is Lebesgue measure on $\R^d$.
Now let $f_1, \ldots , f_N$ be an $L_p(\Omega)$ $\eps$-net for ${\cC}_r (\Omega)$.
Then, for $f \in {\cC}_r (T(\Omega))$ we have
\begin{eqnarray*}
\left ( \int_{T(\Omega)} \big | f - | R |^{1/r} f_i \circ T^{-1} \big |^p d \lambda \right )^{1/p}
& = &  \left ( \int_{\Omega} \big | f\circ T  - | R |^{1/r} f_i  \big |^p | T | d \lambda \right )^{1/p} \\
& = &  \left ( \int_{\Omega}\big | | R |^{1/r}  \left ( | R|^{-1/r} f\circ T  -  f_i \right ) \big |^p |R|^{-1}  d \lambda \right )^{1/p} \\
& = & | R |^{1/r - 1/p}  \left ( \int_{\Omega} \big |  | R|^{-1/r} f\circ T  -  f_i  \big |^p   d \lambda \right )^{1/p},
\end{eqnarray*}
and since $|R|^{-1/r} f\circ T \in {\cC}_r (\Omega)$, for some $i \in \{ 1, \ldots , N \}$ the last display is bounded
above by $|R|^{1/r - 1/p} \eps$.
Thus given an $L_p (\Omega)$ $\eps$-net for ${\cC}_r (\Omega)$ we have constructed an $L_p (T(\Omega))$
$|R|^{1/p-1/r} \eps$-net for ${\cC}_r (T(\Omega))$.
It follows that
\begin{eqnarray}
 N( |R|^{1/r - 1/p} \eps , {\cC}_r (T(\Omega)), \| \cdot \|_{L_p (T(\Omega))})
\le N(\eps, {\cC}_r (\Omega), \| \cdot \|_{L_p (\Omega)}  ) .
\label{CoveringNumberScalingInequalityOne}
\end{eqnarray}
By a similar argument we find that
\begin{eqnarray}
 N( |R|^{1/r - 1/p} \eps , {\cC}_r (T(\Omega)), \| \cdot \|_{L_p (T(\Omega))})
\ge N(\eps, {\cC}_r (\Omega), \| \cdot \|_{L_p (\Omega)}  ) ,
\label{CoveringNumberScalingInequalityTwo}
\end{eqnarray}
and hence the equality (\ref{ScalingIdentity}) holds.

To prove (\ref{ScalingIdentity2}), first note that if $f \in {\cC}_{\infty} (T(\Omega))$,
then $\sup_{\Omega} | f \circ T | = \sup_{T(\Omega)} | f| \le 1$, so $ f \circ T \in {\cC}_{\infty} (\Omega)$.
Then suppose that $[\underline{f}_i , \overline{f}_i ]$, $1 \le i \le N$, are $L_p (\Omega)$  brackets of size $\eps$
for ${\cC}_{\infty}(\Omega)$.
Then $[\underline{f}_i \circ T^{-1} , \overline{f}_i  \circ T^{-1}]$, $1 \le i \le N$, are $L_p (T(\Omega))$ brackets of
size $| R |^{-1/p} \eps$ for ${\cC}_{\infty} (T(\Omega))$.  To see this, note that for some $i \in \{ 1, \ldots , N \}$
$$
\underline{f}_i (x) \le f \circ T (x) \le \overline{f}_i (x) \ \ \ \mbox{for all} \ \ x \in \Omega,
$$
and hence
$$
\underline{f}_i \circ T^{-1}(y) \le f (y) \le \overline{f}_i \circ T^{-1}(y) \ \ \ \mbox{for all} \ \ y \in T(\Omega).
$$
Furthermore,
\begin{eqnarray*}
\lefteqn{\int_{T(\Omega)} | \overline{f}_i \circ T^{-1} (y) - \underline{f}_i \circ T^{-1} (y) |^p d \lambda
 =  \int _{\Omega} | \overline{f}_i  - \underline{f}_i |^p | T | d \lambda } \\
& = & | R |^{-1} \int_{\Omega}   | \overline{f}_i  - \underline{f}_i |^p d \lambda   \le  \left ( | R |^{-1/p} \eps \right )^p .
\end{eqnarray*}
Thus
\begin{eqnarray*}
 N_{[ \, ]} ( |R|^{ - 1/p} \eps , {\cC}_\infty (T(\Omega)), \| \cdot \|_{L_p (T(\Omega)})
\le N_{[\, ]} (\eps, {\cC}_\infty (\Omega), \| \cdot \|_{L_p (\Omega)}  ) .
\end{eqnarray*}
A similar argument yields the reversed inequality, and hence (\ref{ScalingIdentity2}) holds.
\end{proof}

By combining Lemma~\ref{box} and Lemma~\ref{scaling}, we have
\begin{align}
N(\eps, \cC_r(\Omega), \| \cdot \|_{L^p (\Omega)} )
\le  N((d!)^{\frac1r-\frac1p}\cdot |\Omega|^{\frac1r-\frac1p} \eps , \cC_r(T(\Omega)), \|\cdot\|_{L^p(T(\Omega))}),
\label{rescaling}
\end{align}
and
\begin{align}
N_{[\, ]} (\eps, \cC_\infty(\Omega), \| \cdot \|_{L^p (\Omega)} )
\le  N_{[\, ]} ((d!)^{-\frac1p}\cdot |\Omega|^{-\frac1p} \eps , \cC_\infty(T(\Omega)), \|\cdot\|_{L^p(T(\Omega))}),
\label{rescaling2}
\end{align}
where $T(\Omega)$ has volume at least $1/d!$ and is contained in $[0,1]^d$.

\subsection{Under Uniform Lipschitz}
In this subsection, we recall that if we assume the functions in $\cC_r(\Omega)$ are bounded and
uniform Lipschitz, then the metric entropy estimate would follow
from the following known results of Bronshtein
\cite{MR0415155}.  

\begin{lemma}[Bronshtein]\label{Bronshtein} Let $\cK(\rho)$ be the set of all
closed convex sets contained in the closed Euclidean
ball of radius $\rho$ in $\R^{d+1}$, $d\ge1$.  Let $h$ be the Hausdorff distance on $\cK(\rho)$.
There exists a constant $C_0$ depending only on $d$, such that for any $0<\eps < \rho$,
$$
\log N(\eps , \cK(\rho), h ) \le C_0 (\rho \eps^{-1} )^{d/2}.
$$
\end{lemma}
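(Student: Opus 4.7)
The plan is to reproduce Bronshtein's two-step argument: first approximate each convex body by an inscribed polytope with few vertices, and then enumerate the possible polytopes up to Hausdorff precision $\eps$. By the $1$-homogeneity of the Hausdorff metric, I reduce to $\rho=1$, so throughout $K\subset B(0,1)\subset\R^{d+1}$.

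For the approximation step, I would show that every $K\in\cK(1)$ admits an inscribed polytope $P_K\subseteq K$ with $v\le C_1(d)\,\eps^{-d/2}$ vertices and $h(K,P_K)\le \eps/2$. The classical geometric input is a cap-covering estimate: the boundary of a body contained in $B(0,1)$ can be covered by $O(\eps^{-d/2})$ depth-$(\eps/2)$ caps, where the exponent $d/2$ reflects the fact that the Gauss image of such a cap has spherical measure of order $\eps^{d/2}$ (equivalently, the tangential spread of a depth-$\eps$ cap near its extremal direction is of order $\sqrt{\eps}$). Selecting one vertex per cap produces $P_K$. For the counting step, I would then snap each vertex of $P_K$ to the nearest point of a fixed auxiliary net of $B(0,1)$ and enumerate the resulting unordered vertex sets.

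The main obstacle is that this naive counting gives $\log N\lesssim \eps^{-d/2}\log(1/\eps)$, with a spurious logarithmic factor: snapping to an $\eps$-net (required to control the per-vertex perturbation of $P_K$) gives an alphabet of size $O(\eps^{-(d+1)})$ for each of the $v$ vertices. Removing this logarithm is the real work. I would exploit the second-order regularity of support functions: along any great-circle arc $\gamma$ on $S^d$, the one-variable function $t\mapsto h_K(\gamma(t))$ is semiconcave, so $|h_K(\gamma(t_1))+h_K(\gamma(t_3))-2h_K(\gamma(t_2))|\lesssim |t_1-t_3|^2$. Consequently, once the values of $h_K$ are fixed on an $\eta_0$-net of $S^d$, refining to an $(\eta_0/2)$-net costs only $O(1)$ choices per new site rather than $O(\log(1/\eps))$. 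A dyadic chaining from scale $1$ down to scale $\sqrt{\eps}$ then converts the would-be logarithm into a convergent geometric series in the scale parameter and yields the desired bound $\log N(\eps,\cK(1),h)\le C_0\,\eps^{-d/2}$.
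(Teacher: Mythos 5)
The paper does not prove this lemma; it cites it (Bronshtein 1976; Dudley 1999; van der Vaart--Wellner 1996, Lemma 2.7.8). So there is no ``paper proof'' to compare against line by line, and your attempt has to stand on its own. Your overall architecture --- inscribe a polytope with $O(\eps^{-d/2})$ vertices via a depth-$\eps$ cap covering, then count, then kill the spurious logarithm using regularity of support functions --- is indeed the shape of Bronshtein's argument, and you have correctly identified the log removal as the crux. The cap-covering step and the reduction $h(K,L)=\|h_K-h_L\|_{L^\infty(S^d)}$ are fine.

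The gap is in the regularity claim you use to do the chaining. You assert that $t\mapsto h_K(\gamma(t))$ is semiconcave with a \emph{two-sided} quadratic second-difference bound $|h_K(\gamma(t_1))+h_K(\gamma(t_3))-2h_K(\gamma(t_2))|\lesssim|t_1-t_3|^2$. This is false, and the direction is also backwards. Along a great circle $\gamma(t)=u\cos t+v\sin t$ one has $h_K(\gamma(t))=\sup_{x\in K}(\langle x,u\rangle\cos t+\langle x,v\rangle\sin t)$, a supremum of solutions of $f''+f=0$; this gives the distributional inequality $h''\ge -h\ge -1$, i.e.\ \emph{semiconvexity} (a one-sided lower bound on the second derivative), not semiconcavity. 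There is no matching upper bound: for a polytope $K$, $h_K$ restricted to a great circle is piecewise of the form $a\cos t+b\sin t$ with corners, so the second difference at a corner is of order $|t_1-t_3|$, not $|t_1-t_3|^2$. With only the one-sided bound, at a new dyadic site the range of admissible values is still of order $L\delta$ (Lipschitz on one side, semiconvexity on the other), so the naive per-site cost is $\log(L\delta/\eps)$ and the dyadic sum still produces the logarithm you are trying to remove. The actual mechanism that kills the log is different: semiconvexity plus the Lipschitz bound makes $t\mapsto h'(\gamma(t))+t$ monotone and uniformly bounded, so its total variation is $O(1)$, and one counts the number of monotone (equivalently, bounded-variation) sequences on the net rather than making independent per-site choices; the potentially expensive sites are few and their cost is charged against the total variation budget. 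That global bookkeeping (or, equivalently, the reduction to the entropy of bounded convex/Lipschitz functions on a $d$-disk) is the real content and is missing from your sketch.
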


\begin{lemma}
\label{uniform-Lipschitz}
Let $\Omega$ be a closed convex set in $[0,1]^d$, and let $\cF_{\alpha} (\Omega)$ be the class of
convex functions on $\Omega$ that are bounded by $M$ and have Lipschitz constant bounded by $\alpha$.
Then for all $\eps< 2^{-1-1/p} \sqrt{(1+\alpha^2)(M^2+d/4)}$,
\begin{eqnarray*}
\log N_{[\, ]} (\eps, \cF_{\alpha} (\Omega), \| \cdot \|_{L^p (\Omega)} )
\le 2^{-d}C_0 \{(1 + \alpha^2) (4M^2+d)\}^{d/4} \eps^{-d/2}
\end{eqnarray*}
where $C_0$ is the same constant as in Lemma~\ref{Bronshtein}.
\end{lemma}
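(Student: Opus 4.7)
The plan is to encode each $f\in\cF_\alpha(\Omega)$ by the convex set
$S_f:=\{(x,t)\in\Omega\times[-M,M]:f(x)\le t\}$
in $\R^{d+1}$, apply Bronshtein's Lemma~\ref{Bronshtein} to cover this family in Hausdorff distance, and then translate Hausdorff closeness into an $L^p(\Omega)$ bracket by means of the uniform Lipschitz assumption. Because $f$ is convex with $|f|\le M$ and $\Omega\subset[0,1]^d$, the set $S_f$ is closed, convex, and contained in $[0,1]^d\times[-M,M]$, hence in a Euclidean ball of radius $\rho:=\sqrt{d/4+M^2}=\tfrac12\sqrt{4M^2+d}$, so $S_f\in\cK(\rho)$.

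The core estimate is the geometric comparison
$$
\|f_1-f_2\|_\infty\le \sqrt{1+\alpha^2}\, h(S_{f_1},S_{f_2}),\qquad f_1,f_2\in\cF_\alpha(\Omega).
$$
To see this, fix $x_0$ with, say, $f_1(x_0)>f_2(x_0)$. The point $(x_0,f_2(x_0))\in S_{f_2}$ has, at Hausdorff distance $\eta:=h(S_{f_1},S_{f_2})$, a companion $(x',t')\in S_{f_1}$. Writing $u:=\|x_0-x'\|$ and $s:=t'-f_2(x_0)$, we have $s^2+u^2\le \eta^2$ and, by the $\alpha$-Lipschitz property and $t'\ge f_1(x')$,
$$
f_1(x_0)-f_2(x_0)\le f_1(x')+\alpha u-f_2(x_0)\le s+\alpha u\le \sqrt{1+\alpha^2}\sqrt{s^2+u^2}\le \eta\sqrt{1+\alpha^2},
$$
where the penultimate step is Cauchy--Schwarz.

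For a parameter $\eta>0$ to be chosen, Lemma~\ref{Bronshtein} produces a Hausdorff $\eta$-net $K_1,\dots,K_N$ for $\cK(\rho)$ with $\log N\le C_0(\rho/\eta)^{d/2}$. Retain only those $K_i$ within $\eta$ of some $S_f$, $f\in\cF_\alpha(\Omega)$, and pick one such representative $f_i$ per retained index. Two applications of the geometric comparison combined with the triangle inequality show that $\{f_i\}$ is a $\|\cdot\|_\infty$-net of precision $2\eta\sqrt{1+\alpha^2}$, so the pairs $[\,f_i-2\eta\sqrt{1+\alpha^2},\,f_i+2\eta\sqrt{1+\alpha^2}\,]$ form a bracketing cover of $\cF_\alpha(\Omega)$ of $L^p(\Omega)$-size at most $4\eta\sqrt{1+\alpha^2}\,|\Omega|^{1/p}\le 4\eta\sqrt{1+\alpha^2}$. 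Choosing $\eta$ proportional to $\eps/\sqrt{1+\alpha^2}$ yields the claimed $\eps^{-d/2}$ dependence; the hypothesis $\eps<2^{-1-1/p}\sqrt{(1+\alpha^2)(M^2+d/4)}$ is precisely what ensures $\eta<\rho$ so that Bronshtein's bound is meaningful.

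The main obstacle is the geometric comparison above: the constant $\sqrt{1+\alpha^2}$ is tight (attained by parallel affine functions of slope $\alpha$ translated perpendicular to their graphs), and any worse constant would degrade the exponential. A secondary obstacle, if one wishes to recover the precise prefactor $2^{-d}C_0\{(1+\alpha^2)(4M^2+d)\}^{d/4}$ stated in the lemma rather than a constant times it, is to build the brackets $\underline{f}_i,\overline{f}_i$ directly from each Bronshtein center $K_i$---for instance via the inner and outer $\eta$-parallel bodies of $K_i$---so as to avoid the two factors of $2$ introduced above by first passing through a sup-norm net and then symmetrising.
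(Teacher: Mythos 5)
Your proof is essentially the paper's: both identify $f$ with its (truncated) epigraph sitting in $\cK\bigl(\sqrt{M^2+d/4}\bigr)$, invoke Bronshtein's lemma, prove the key comparison $\|f-g\|_\infty\le\sqrt{1+\alpha^2}\,h(\mathrm{epi}(f),\mathrm{epi}(g))$ using the $\alpha$-Lipschitz bound together with Cauchy--Schwarz, and then convert a sup-norm net into $L^p$ brackets using $|\Omega|\le 1$. Your handling of the prefactor is a bit looser (you pay an extra factor of $2$ by routing through representatives $f_i$ of the Bronshtein centers, whereas the paper builds the brackets directly from the sup-norm net), but you correctly flag this and it does not affect the $\eps^{-d/2}$ rate or the dependence on $\alpha,M,d$.
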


\begin{remark}  Lemma~\ref{Bronshtein} can be found in
\cite{MR0415155}, 
\cite{MR1720712},
or 
\cite{MR1385671}
(Lemma 2.7.8, page 163). 
Lemma~\ref{uniform-Lipschitz} is also known for regular metric entropy.
For example, it would follow from
\cite{MR1385671}
Corollary 2.7.10, page 164. Because we deal with bracketing entropy,
 we include a proof here for the convenience of the reader.
\end{remark}

\begin{proof}
For each $f \in \cF_{\alpha} (\Omega)$, since $\Omega$ is a closed and convex set and $f$ is convex, the
epigraph $\mbox{epi}(f):= \{ (x,t) : \ f(x) \le t \le M, \ x \in \Omega \}$ is a closed convex set contained in the
closed Euclidean ball in $\R^{d+1}$ with radius $\sqrt{d/4+M^2}$ and center at $(1/2,1/2, \ldots , 1/2, 0)$.

On the other hand, for any $x\in \Omega$, $y \in \Omega$, and $f,g \in \cF_{\alpha}(\Omega)$,
\begin{align*}
| f(x) - g(x) |
& \le  | f(x) - f(y) | + | f(y) - g(x)| \\
& \le  \alpha \| x-y \|_2 + | f(y) - g(x) | \\
& \le  \sqrt{1+\alpha^2} \| (x,g(x)) - (y,f(y)) \|_2 .
\end{align*}
Taking the infimum on $y \in \Omega$ followed by the supremum on $x \in \Omega$, we find that
\begin{align*}
\| f - g \|_{\infty} \le \sqrt{1+\alpha^2} h ( \mbox{epi}(f), \mbox{epi}(g) ).
\end{align*}
Thus, by Lemma~\ref{Bronshtein}
\begin{align*}
\log N( \eta , \cC_\infty(\Omega) , \| \cdot \|_{\infty} )
& \le  \log N( (1+\alpha^2)^{-1/2} \eta ,  \cK(\sqrt{M^2+d/4}), h) \\
& \le  C_0 \{ \sqrt{(1+\alpha^2)(M^2+d/4)} \eta^{-1} \}^{d/2} .
\end{align*}
Thus there exist
 $N \le \exp ( C_0 \{ \sqrt{(1+\alpha^2)(M^2+d/4)} \eta^{-1} \}^{d/2})$ functions
$f_1, \ldots , f_N $ defined on $\Omega$, such that for each $f \in \cK(\Omega)$,
there exists some $f_i$, $i \in \{ 1, 2,\ldots , N \}$,
such that $| f(x) - f_i (x) | \le \eta$ for all $x \in \Omega$.  For each $i \in \{1, \ldots , N\}$ define
\begin{align*}
& \overline{f}_i (x) = \sup \{ f(x) : \ |f(x) - f_i (x) | \le \eta, \ f \in \cF_{\alpha} (\Omega) \}; \\
& \underline{f}_i (x) = \inf \{ f(x): \ |f(x) - f_i (x) | \le \eta, \ f \in \cF_{\alpha}(\Omega) \}
\end{align*}
for each $x \in \Omega$. Then we have
\begin{align*}
\| \overline{f}_i - \underline{f}_i \|_{\infty} \le 2 \eta .
\end{align*}
In particular this implies that for all $1 \le p < \infty$
\begin{align*}
\int_\Omega | \overline{f}_i (x) - \underline{f}_i (x) |^p d\lambda(x) \le (2\eta)^{p} .
\end{align*}
Letting $\eps = 2\eta$ we find that
\begin{align*}
\log N_{[\, ]} ( \eps , \cC_\infty(\Omega), \| \cdot \|_p)
& \le   \log N( 2\eta , \cC_\infty(\Omega) , \| \cdot \|_{\infty} )  \\
& \le     C_0 \{ \sqrt{(1+\alpha^2)(M^2+d/4)} (2 \eta)^{-1} \}^{d/2} \\
& =  2^{-d} C_0 \{ (1+\alpha^2) (4M^2+d)\} ^{d/4}  \eps^{-d/2} .
\end{align*}
\end{proof}

\subsection{Paring the Boundary}

In this subsection, we show that if we pare off the boundary of $\Omega$ by $\delta$,  consider the set
\begin{eqnarray}
\Omega_\delta=\set{x\in \Omega}{{\rm dist}(x,\partial \Omega)\ge \delta},
\label{DeltaErosionOfOmega}
\end{eqnarray}
and then consider functions restricted to $\Omega_\delta$, the entropy can be
estimated using Lemma~\ref{uniform-Lipschitz}.
The details are proved in the following two lemmas.

\begin{lemma}\label{bounded}
Let $\Omega$ be a
compact
convex set in $[0,1]^d$ with $| \Omega | \ge 1/d!$.
Then there exists a constant $\Lambda$ depending only on $d$,
such that for any $1\le r\le \infty$ and any $0<\delta\le 1$,
$$
\cC_r(\Omega)\subset \Lambda \delta^{-d/r}\cdot \cC_\infty(\Omega_\delta),
$$
where $\Omega_{\delta}$  
is as defined in (\ref{DeltaErosionOfOmega}).
In fact $\Lambda = \max \{ (d \Gamma (d/2) /\pi^{d/2} )^{1/r} , (d!) d 2^{d+2} \}$ works.
\end{lemma}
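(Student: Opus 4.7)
The plan is to prove, equivalently, that every $f\in\cC_r(\Omega)$ satisfies $|f(x_0)|\le\Lambda\delta^{-d/r}$ at every $x_0\in\Omega_\delta$; convexity on $\Omega_\delta$ is inherited from $\Omega$. The case $r=\infty$ is immediate from $\|f\|_\infty\le 1$, so I assume $1\le r<\infty$ and treat the positive and negative signs of $f(x_0)$ separately.

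For the upper bound on $f(x_0)$, I would exploit that $B(x_0,\delta)\subset\Omega$. Midpoint convexity along diameters of the ball gives $f(y)+f(2x_0-y)\ge 2f(x_0)$, so reflection through $x_0$ forces the set $\{y\in B(x_0,\delta):f(y)\ge f(x_0)\}$ to have volume at least $V_d\delta^d/2$, where $V_d$ is the volume of the unit ball in $\R^d$. Combining with $\|f\|_r\le 1$ gives $f(x_0)\le (2/V_d)^{1/r}\delta^{-d/r}=(d\Gamma(d/2)/\pi^{d/2})^{1/r}\delta^{-d/r}$, which is the first term in $\Lambda\delta^{-d/r}$.

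For the lower bound, write $f(x_0)=-N$ with $N>0$. The reflection trick fails because convexity only bounds $f$ from above along secants, not from below. I would instead couple the $L^r$ bound with convexity globally. Setting $M=(2/|\Omega|)^{1/r}$, Chebyshev applied to $f^+$ yields $|\{f>M\}|\le|\Omega|/2$, so the convex sublevel set $\Omega^*=\{y\in\Omega: f(y)\le M\}$ contains $x_0$ and satisfies $|\Omega^*|\ge|\Omega|/2$. For any $y\in\Omega^*$, convexity along $[x_0,y]$ gives $f((1-t)x_0+ty)\le -N+t(N+M)$; assuming $N\ge 2M$, the right-hand side is $\le -N/2$ for all $t\in[0,1/3]$. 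Since $\Omega^*$ is convex and contains $x_0$, the resulting set $x_0+\tfrac13(\Omega^*-x_0)$ on which $f\le -N/2$ has volume at least $|\Omega|/(2\cdot 3^d)\ge 1/(2\cdot 3^d\cdot d!)$, and plugging into $\|f\|_r\le 1$ forces $N\le C(d)$ for some constant depending only on $d$. The complementary case $N<2M$ gives the same type of bound. Finally, using $\delta\le 1$, this constant bound is absorbed into $C(d)\delta^{-d/r}$, matching the second term $(d!)d\,2^{d+2}$ of $\Lambda$ up to numerical constants.

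The main obstacle is the lower bound: the upper bound is a clean local reflection on $B(x_0,\delta)$ giving the $\delta^{-d/r}$ rate immediately, but on the lower side convexity controls $f$ only from above along secants and only from below via supporting hyperplanes that do not spread a pointwise extremum to a region. The remedy is the hybrid Chebyshev/cone argument above, which uses the global $L^r$ bound to locate a large "not too positive" region, then uses the secant upper bound to propagate the deep value $-N$ into a cone of volume comparable to $|\Omega|$. The hypothesis $|\Omega|\ge 1/d!$ enters precisely here to make the final constant depend only on $d$.
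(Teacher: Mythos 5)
Your proof is correct, and for the lower bound it takes a genuinely different route from the paper. For the upper bound, both you and the paper arrive at the constant $(d\Gamma(d/2)/\pi^{d/2})^{1/r}$, but by different mechanisms: you reflect $B(x_0,\delta)$ through its center and use midpoint convexity $f(y)+f(2x_0-y)\ge 2f(x_0)$ to conclude that at least half the ball (by the measure-preserving reflection) satisfies $f\ge f(x_0)$; the paper instead works at the maximizer $z_0$, produces the open convex sublevel set $V=\{f<f(z_0)\}$ with $z_0$ on its boundary, and uses a separating hyperplane to exhibit a deterministic half-ball on which $f\ge f(z_0)$. Both yield a set of volume $\tfrac12 V_d\delta^d$ on which $|f|\ge f(x_0)$, hence the same constant. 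For the lower bound, the two arguments are structurally quite different. The paper's argument centers on the minimizer $x_0$ and the $0$-sublevel set $K=\{f\le 0\}$; it constructs the annular shell $K_\eta=\Omega\cap\{x_0+[(1+\eta)K_0\setminus(1-\eta)K_0]\}$ (with $K_0=K-x_0$), shows via a piecewise-linear comparison function that $|f|>\eta|f(x_0)|$ outside $K_\eta$, and uses the bound $|K_\eta|\le d2^d\eta|\Omega|$ together with the $L^r$ constraint to pin down $|f(x_0)|$. Your argument instead uses Markov's inequality to locate a large convex sublevel region $\Omega^*=\{f\le M\}$ with $M=(2/|\Omega|)^{1/r}$ and $|\Omega^*|\ge|\Omega|/2$, and then performs a homothety of ratio $1/3$ centered at $x_0$ inside $\Omega^*$ to produce a region of volume $\ge|\Omega|/(2\cdot 3^d)$ on which $f\le -N/2$. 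Your route is arguably cleaner and more geometric (a single cone/dilation step rather than an annular-shell decomposition), at the cost of a slightly worse explicit constant (on the order of $3^d d!$ rather than $2^{d+2}d\,d!$), which you correctly flag as harmless since the lemma only requires a $d$-dependent $\Lambda$. Both proofs use $|\Omega|\ge 1/d!$ in exactly the same place, namely to make the final lower-bound constant depend only on $d$, and both observe that this part of the argument does not use $\delta$ at all (the lower bound holds on all of $\Omega$).
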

\begin{proof}
First, we show that if $f\in \cC_r(\Omega)$, then on $\Omega_\delta$
\begin{align}
f\ge -(d!)^{1/r} 2^{d+2}d.\label{lower-bound}
\end{align}
Let $x_0$ be a minimizer of $f$ on $\Omega_\delta$. If $f(x_0)\ge 0$, then there is
nothing to prove; otherwise, the set $K:=\set{x\in \Omega}{f(x)\le 0}$
is a closed convex set with $x_0$ as an interior point.
Denote $K_0=K-x_0$, and define
$$
K_\eta= \set{ x \in \Omega}{ x = x_0 + y, \ y \in (1+\eta)K_0 \setminus (1-\eta)K_0 },
$$
where $0<\eta<1$. We show that if $x\notin K_\eta$,
then $|f(x)|>\eta|f(x_0)|$.    Indeed, consider a function
$g$ on $\Omega$ defined so that: $g(x_0)=f(x_0)$,  $g(\gamma)=f(\gamma)$
for all $\gamma\in \partial K$, and $g$ is linear on the line segment
$$
L_\gamma:=\set{x\in \Omega}{x=x_0+t(\gamma-x_0), t\ge 0}.
$$
Then, by the convexity of $f$ on each $L_\gamma$, we have
$|f(x)|\ge |g(x)|$ on $\Omega$. Because for all $x\notin K_\eta$, $\|x-\gamma\|\ge \eta\|x_0-\gamma\|$, we have
$$|g(x)|=|g(\gamma)|+\frac{\|x-\gamma\|}{\|x_0-\gamma\|}|f(x_0)|>\eta|f(x_0)|.$$
Hence, on $\Omega\setminus K_\eta$, $|f(x)|\ge \eta|f(x_0)|$.

Because the volume of $K_\eta$ is bounded by
$[(1+\eta)^d-(1-\eta)^d]\cdot |K|\le d2^d\eta | \Omega | $,  we have
\begin{align*}
1\ge \int_{\Omega\setminus K_\eta} |f(x)|^rd\lambda(x)\ge (\eta|f(x_0)|)^r\cdot[1-d2^d\eta]\cdot | \Omega |.
\end{align*}
This implies that
$$
|f(x_0)|\le \eta^{-1}| \Omega |^{-1/r} (1-d2^d\eta)^{-1/r}.
$$
By choosing $\eta=[d2^d(1+1/r)]^{-1}$, we obtain
$$
|f(x_0)|\le | \Omega |^{-1/r} d2^d\left(1+\frac1r\right)(1+r)^{1/r}\le (d!)^{1/r}2^{d+2}d \le (d!) 2^{d+2} d .
$$
This proves (\ref{lower-bound}).

Next, we show that there exists a constant $\Lambda$ depending on $\Omega$ such that on $\Omega_\delta$,
$f(x)\le \Lambda \delta^{-d/r}$.

Let $z_0$ be a maximizer of $f$ on $\Omega_\delta$. If $f(z_0)\le 0$, there is
nothing to prove. So, we assume $f(z_0)>0$. Let $V=\set{x\in \Omega}{f(x)<f(z_0)}$.
Then $V$ is a convex set with $z_0$ at its boundary. There exists a 
hyperplane that separates $V$ and $z_0$.
This 
hyperplane separates $\Omega$ into two parts. On the part not containing $V$,
$f\ge f(z_0)$. In particular, $f\ge f(z_0)$ on the half of the ball centered at $z_0$
with radius $\delta$. Calling this half of the ball $W$, we have
$$1\ge \int_W|f(x)|^rd\lambda(x)\ge \frac{\pi^{d/2}}{d\Gamma(d/2)}\delta^df(z_0)^r,$$
which implies that
$$f(z_0)\le \left(\frac{d\Gamma(d/2)}{\pi^{d/2}}\right)^{1/r}\delta^{-d/r}.$$
Together with (\ref{lower-bound}) we obtain that there exists some
$\Lambda$ depending only on $d$ such that for all $x\in \Omega_\delta$,
$|f(x)|\le \Lambda \delta^{-d/r}$.
\end{proof}

\begin{lemma}\label{D0}
Let $\Omega$ be a closed convex set in $[0,1]^d$.
For any $1\le r\le \infty$ and any $0<\delta\le 1$,
$$
N_{[\,]}(\eps, \cC_r(\Omega),  \| \cdot \|_{L^p (\Omega_\delta)} )\le \expP{C_2\delta^{-d/2-d^2/r}\eps^{-d/2}},
$$
where $\Omega_{\delta}$
is as defined in (\ref{DeltaErosionOfOmega})
and $C_2$ is a constant depending only on $d$, $C_0$ in
Lemma~\ref{Bronshtein}, and $\Lambda$ in Lemma~\ref{bounded}.
\end{lemma}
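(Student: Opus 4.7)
The plan is to reduce the problem to the uniformly bounded, uniformly Lipschitz setting of Lemma~\ref{uniform-Lipschitz}, by first replacing $\Omega_\delta$ with the slightly smaller erosion $\Omega_{\delta}\subset \Omega_{\delta/2}$ in order to trade the $L^r$ bound on $\Omega$ for an $L^\infty$ bound together with a Lipschitz bound on $\Omega_\delta$.

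First, apply Lemma~\ref{bounded} with $\delta/2$ in place of $\delta$ to obtain $\cC_r(\Omega)\subset M\cdot \cC_\infty(\Omega_{\delta/2})$ with $M:=\Lambda (\delta/2)^{-d/r}$; that is, every $f\in\cC_r(\Omega)$ satisfies $|f|\le M$ on $\Omega_{\delta/2}$. Next, I would show that the restriction of such an $f$ to $\Omega_\delta$ is Lipschitz with constant $\alpha:=4M/\delta$. This is the standard ``bounded convex functions are inner-Lipschitz'' estimate: for $x,y\in\Omega_\delta$ with $x\neq y$, the point $z:=y+\tfrac{\delta/2}{\|y-x\|}(y-x)$ lies in $\Omega_{\delta/2}$ because $\|z-y\|=\delta/2$ and $y\in \Omega_\delta$, and $y$ is a convex combination of $x$ and $z$, so
\begin{equation*}
f(y)-f(x)\le \frac{\|y-x\|}{\|y-x\|+\delta/2}\bigl(f(z)-f(x)\bigr)\le \frac{2M}{\delta/2}\|y-x\|=\frac{4M}{\delta}\|y-x\|,
\end{equation*}
and symmetrically $f(x)-f(y)\le (4M/\delta)\|y-x\|$.

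Consequently, the restriction map sends $\cC_r(\Omega)$ into the class $\cF_\alpha(\Omega_\delta)$ of convex functions on $\Omega_\delta$ that are bounded by $M$ and Lipschitz with constant $\alpha$ (note $\Omega_\delta\subset [0,1]^d$). Any $L^p(\Omega_\delta)$ bracket for $\cF_\alpha(\Omega_\delta)$ is automatically an $L^p(\Omega_\delta)$ bracket for the corresponding subset of $\cC_r(\Omega)$, so Lemma~\ref{uniform-Lipschitz} yields
\begin{equation*}
\log N_{[\,]}(\eps,\cC_r(\Omega),\|\cdot\|_{L^p(\Omega_\delta)})\le 2^{-d}C_0\bigl\{(1+\alpha^2)(4M^2+d)\bigr\}^{d/4}\eps^{-d/2}.
\end{equation*}
(For $\eps$ exceeding the threshold in Lemma~\ref{uniform-Lipschitz} the covering number is trivially $1$, so the bound holds for all $\eps>0$ after enlarging the constant.)

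Finally I would substitute $M\asymp \delta^{-d/r}$ and $\alpha\asymp \delta^{-d/r-1}$ to bound
\begin{equation*}
\bigl\{(1+\alpha^2)(4M^2+d)\bigr\}^{d/4}\lesssim \bigl(\delta^{-2d/r-2}\cdot \delta^{-2d/r}\bigr)^{d/4}=\delta^{-d^2/r-d/2},
\end{equation*}
absorbing the $\Lambda$'s, $C_0$, and $d$-dependent factors into the single constant $C_2$. The only step that requires genuine care is the Lipschitz estimate (ensuring the one-sided extension point $z$ really lands in $\Omega_{\delta/2}$ and handling the case when $\Omega_\delta$ is empty, where the statement is trivially true); the rest is bookkeeping of the exponents of $\delta$.
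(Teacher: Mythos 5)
Your proposal is correct and follows essentially the same route as the paper: apply Lemma~\ref{bounded} with $\delta/2$ to bound $|f|$ by $M\asymp\Lambda\delta^{-d/r}$ on $\Omega_{\delta/2}$, deduce the Lipschitz constant $\asymp M/\delta$ on the inner set $\Omega_\delta$, feed both into Lemma~\ref{uniform-Lipschitz}, and track exponents. The only difference is that you prove the inner-Lipschitz estimate directly (correctly, with the extension point $z\in\Omega_{\delta/2}$), whereas the paper simply cites problem~2.7.4 of van der Vaart and Wellner for the same bound $2(\delta/2)^{-1}M$; your explicit argument is a small but welcome self-contained addition, and the constants you obtain ($\alpha=4M/\delta=2^{2+d/r}\Lambda\delta^{-1-d/r}$) match the paper's exactly.
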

\begin{proof}
We show that when restricted to $\Omega_\delta$, $f$ has a
Lipschitz constant bounded by $2^{2+d/r}\Lambda\delta^{-1-d/r}$. Indeed, by
Lemma~\ref{bounded}, $f$ is bounded by $2^{d/r}\Lambda\delta^{-d/r}$ on
$\Omega_{\delta/2}$. 
Note that $\Omega_\delta\subset \Omega_{\delta/2}\subset \Omega$. Thus
by
\cite{MR1385671},
problem 2.7.4 page 165,
$f$ is Lipschitz on $\Omega_{\delta}$ with Lipschitz constant
$2 (\delta/2)^{-1} 2^{d/r} \Lambda \delta^{-d/r} = 2^{2+d/r} \Lambda \delta^{-1-d/r}$.

Thus by Lemma~\ref{uniform-Lipschitz} it follows that
\begin{align}
\log N_{[\, ]}& (\epsilon, \cC_r(\Omega) , \| \cdot \|_{L^p (\Omega_\delta)} ) \nonumber \\
& \le 2^{-d}C_0 (1+2^{4+2d/r}\Lambda^2 \delta^{-2-2d/r})^{d/4}
          (4 \Lambda^2 \delta^{-2d/r} + d)^{d/4} \epsilon^{-d/2} \nonumber \\
&\le  C_2 \delta^{-d/2-d^2/r}\eps^{-d/2}
\label{LipschitzPieceControl}
\end{align}
for some constant $C_2$ depending only on $d$ and $r$.
\end{proof}

\subsection{Combining}
In this subsection, we prove a lemma that enables us to study metric entropy by decomposing the set $\Omega$.
\begin{lemma}[Union] \label{union}
If $\Omega = \cup_{i=1}^k \Omega_i$, then for all $1\le p<r\le \infty$,
\begin{align}
&N (\eps , \cC_r(\Omega), \| \cdot \|_{L^p (\Omega)} )
\le \prod_{i=1}^k N\left ( \delta_i, \cC_r(\Omega_i), \| \cdot \|_{L^p (\Omega_i)} \right ) ,\label{entropy-union}\\
& N_{[\, ]} (\eps , \cC_\infty(\Omega), \| \cdot \|_{L^p (\Omega)} )
\le \prod_{i=1}^k N_{[\, ]} \left ( \delta_i, \cC_\infty(\Omega_i), \| \cdot \|_{L^p (\Omega_i)} \right )
\label{bracket-union},
\end{align}
where $\eps=(\sum_{i=1}^k\delta_i^p)^{1/p}$.
Furthermore, if $\Omega_1, \Omega_2, \ldots, \Omega_k$ have disjoint interiors, then
\begin{align}
N (\eps , \cC_r(\Omega), \| \cdot \|_{L^p (\Omega)} )
\le 4^k \prod_{i=1}^kN\left ( \eta_i, \cC_r(\Omega_i), \| \cdot \|_{L^p (\Omega_i)} \right )\label{disjoint-union}
\end{align}
where
$$
\left(\sum_{i=1}^k\eta_i^{\frac{rp}{r-p}}\right)^{\frac{r-p}{rp}}\le 2^{-1/r}\eps,
$$
which is stronger than (\ref{entropy-union}) when $r<\infty$.
\end{lemma}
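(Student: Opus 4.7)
The first two inequalities \eqref{entropy-union} and \eqref{bracket-union} admit a direct patching argument. For $f \in \cC_r(\Omega)$, monotonicity of the $L^r$-norm under restriction gives $f|_{\Omega_i} \in \cC_r(\Omega_i)$. From an $\eta_i$-cover $\cN_i$ of $\cC_r(\Omega_i)$ in $L^p(\Omega_i)$, pick $g_i \in \cN_i$ with $\|f - g_i\|_{L^p(\Omega_i)} \le \eta_i$. Partition $\Omega$ into disjoint measurable pieces $\tilde\Omega_i := \Omega_i \setminus \bigcup_{j < i} \Omega_j$ and patch by setting $\tilde f := g_i$ on $\tilde\Omega_i$. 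Additivity of $\|\cdot\|_{L^p}^p$ over this partition yields
\[
\|f - \tilde f\|_{L^p(\Omega)}^p = \sum_i \|f - g_i\|_{L^p(\tilde\Omega_i)}^p \le \sum_i \|f - g_i\|_{L^p(\Omega_i)}^p \le \sum_i \eta_i^p = \eps^p.
\]
The bracket version \eqref{bracket-union} is entirely parallel: pick brackets $[\underline g_i, \overline g_i]$ sandwiching $f|_{\Omega_i}$ of size $\eta_i$, and patch $\underline F := \underline g_i$ and $\overline F := \overline g_i$ on $\tilde\Omega_i$. The pointwise inequality $\underline F \le f \le \overline F$ survives the patching, and the $L^p$ bound is the same computation.

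For the refined inequality \eqref{disjoint-union}, the disjoint-interior hypothesis gives the crucial additive decomposition $\|f\|_{L^r(\Omega)}^r = \sum_i a_i^r$ where $a_i := \|f\|_{L^r(\Omega_i)}$, so $\sum_i a_i^r \le 1$. This $\ell^r$-simplex constraint forces most coordinates to be small, allowing \emph{coarser} covers on those pieces and explaining the improvement from the $\ell^p$ constraint in \eqref{entropy-union} to the larger $\ell^q$ constraint, with $q = rp/(r-p)$, in \eqref{disjoint-union}. The plan has three steps: (a) discretize $(a_1,\ldots,a_k)$ to an admissible profile $(\tilde a_1,\ldots,\tilde a_k)$ with $\tilde a_i \ge a_i$ and $\sum_i \tilde a_i^r \le 2$; (b) observe that $f|_{\Omega_i}/\tilde a_i \in \cC_r(\Omega_i)$ whenever $\tilde a_i > 0$, approximate it by some $g_i$ from an $\eta_i$-cover of $\cC_r(\Omega_i)$, and patch $\tilde f := \tilde a_i g_i$ on $\Omega_i$ (with $\tilde f \equiv 0$ where $\tilde a_i = 0$), so that $\|f - \tilde f\|_{L^p(\Omega_i)} \le \tilde a_i \eta_i$; (c) combine via H\"older's inequality with conjugate exponents $r/p$ and $r/(r-p)$:
\[
\|f - \tilde f\|_{L^p(\Omega)}^p \le \sum_i \tilde a_i^p \eta_i^p \le \Bigl(\sum_i \tilde a_i^r\Bigr)^{p/r}\Bigl(\sum_i \eta_i^{q}\Bigr)^{p/q}.
\]
The two bounds $\sum_i \tilde a_i^r \le 2$ and $(\sum_i \eta_i^q)^{1/q} \le 2^{-1/r}\eps$ then give $\|f - \tilde f\|_{L^p(\Omega)} \le 2^{1/r}\cdot 2^{-1/r}\eps = \eps$.

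The principal obstacle is step (a): arranging the discretization so that the family of admissible profiles has cardinality at most $4^k$ while preserving $\sum_i \tilde a_i^r \le 2$. A multiplicative dyadic grid with ratio $2^{1/r}$ between successive levels (so $\tilde a_i^r \in \{0, 2^{-j}: j \ge 0\}$) naturally gives $\sum_i \tilde a_i^r \le 2\sum_i a_i^r \le 2$, but the number of grid values per coordinate is a priori infinite. One must exploit the $\ell^r$-simplex constraint $\sum a_i^r \le 1$ to count only the actually realizable level assignments; this is essentially a compositions-style estimate for the discrete $\ell^r$-simplex, and the factor $4^k$ reflects a binomial-type bound on the joint distribution of the $k$ dyadic levels. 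Once this counting step is in place, the total cover size is at most $4^k \prod_i N(\eta_i, \cC_r(\Omega_i), \|\cdot\|_{L^p(\Omega_i)})$, completing the proof.
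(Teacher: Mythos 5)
Your proofs of \eqref{entropy-union} and \eqref{bracket-union} are correct and coincide with the paper's: restrict, cover each piece, patch along a disjoint refinement $\tilde\Omega_i=\Omega_i\setminus\bigcup_{j<i}\Omega_j$, and add the $p$-th powers. Your overall structure for \eqref{disjoint-union} is also the right one: discretize the mass profile $(a_1,\dots,a_k)$ with $a_i=\|f\|_{L^r(\Omega_i)}$, rescale each restriction into the unit $L^r(\Omega_i)$ ball before covering, and close with H\"older at exponents $r/p$ and $r/(r-p)$. That H\"older computation is correct as you wrote it.

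The genuine gap is precisely what you flag as ``the principal obstacle'': your multiplicative dyadic discretization $\tilde a_i^r\in\{0,2^{-j}:j\ge0\}$ has infinitely many levels, and the constraint $\sum_i\tilde a_i^r\le2$ does \emph{not} cut the set of admissible $k$-tuples of levels down to a finite, let alone $4^k$-sized, family — each $j_i$ can be taken arbitrarily large while still satisfying the constraint. And you cannot simply truncate deep levels to $\tilde a_i=0$, since you need $\tilde a_i\ge a_i$ to control $\|f-\tilde f\|_{L^p(\Omega_i)}$. Capping the depth at roughly $\log_2 k$ (so the smallest nonzero $\tilde a_i^r$ is about $1/k$) makes the grid finite but then a crude per-coordinate count gives $(\log_2 k)^k$, not $4^k$; the ``compositions-style estimate'' you gesture at does not obviously apply to a multiplicative grid. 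The paper avoids this entirely with an \emph{additive} discretization: set $n_i(f)$ to be the smallest positive integer $\ge k\,a_i^r$. Then $n_i\ge1$ automatically floors the grid at $1/k$, and $n_i<k\,a_i^r+1$ gives $\sum_i n_i\le 2k$. The number of positive-integer tuples with $\sum n_i\le 2k$ is exactly $\binom{2k}{k}<4^k$ by stars and bars. Replacing your dyadic grid by this integer grid (and using $\tilde a_i^r=n_i/k$, so $\sum\tilde a_i^r\le2$) closes the gap and recovers the factor $4^k$; the rest of your argument goes through unchanged.
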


\begin{proof}
For each $i \in \{ 1, 2, \ldots , k\}$, there exists a set $\cN_i$ of $N_i$ elements, where
$$
N_i := N(\delta_i , \cC_r(\Omega_i), \| \cdot \|_{L^p (\Omega_i)} )
$$
such that, for each $f \in \cC_r(\Omega)\subset \cC_r(\Omega_i)$,
there exists $f_i \in \cN_i$
satisfying
\begin{eqnarray*}
\int_{\Omega_i} | f_i(x) - f(x) |^p d\lambda(x) \le  \delta_i^p .
\end{eqnarray*}
Define $ \hat{f}(x) = f_{i} (x)$ for $x \in \Omega_i \setminus \cup_{j < i} \Omega_{j}$, $1 \le i \le k$.
Then we have
$$
\int_\Omega | f (x) - \hat{f}(x) |^p d\lambda(x) \le \sum_{i=1}^k \int_{\Omega_i} | f(x) - f_i (x) |^p d\lambda(x)
\le \sum_{i=1}^k\delta_i^p = \eps^p .
$$
Since $\hat{f}$ is determined by $f_1, f_2, \ldots, f_k$, and each $f_i$ has at most $N_i$ possibilities,
the total number of possibilities for $\hat{f}$ is no more than $N_1N_2\cdots N_k$.
Thus, (\ref{entropy-union}) follows.

The proof of (\ref{bracket-union}) is similar. For each $i \in \{ 1, 2, \ldots , k\}$, there exists a set $\cN_i$ of $\hat{N}_i$ brackets, where
$$
\hat{N}_i := N_{[\,]} (\delta_i , \cC_\infty(\Omega_i), \| \cdot \|_{L^p (\Omega_i)} )
$$
such that, for each $f \in \cC_\infty(\Omega)\subset \cC_\infty(\Omega_i)$,
there exists a bracket $[\underline{f}_i, \overline{f}_i ] \in \cN_i$
satisfying $\underline{f}_j (x) \le f(x) \le \overline{f}_j(x) $ for all $x \in \Omega_i$, and
\begin{eqnarray*}
\int_{\Omega_i} | \overline{f}_i(x) - \underline{f}_i (x) |^p d\lambda(x) \le  \delta_i^p .
\end{eqnarray*}
Define $ \overline{f}(x) = \overline{f}_{i} (x)$,
$\underline{f}(x) = \underline{f}_i (x)$, $x \in \Omega_i \setminus \cup_{j < i} \Omega_{j}$, $1 \le i \le k$.
Then we have $\underline{f}(x) \le f(x) \le \overline{f}(x)$ for all $x \in \Omega$, and
$$
\int_\Omega | \overline{f} (x) - \underline{f} (x) |^p d\lambda(x) \le \sum_{i=1}^k \int_{\Omega_i} | \overline{f}_i (x) - \underline{f}_i (x) |^p d\lambda(x)
\le \sum_{i=1}^k\delta_i^p = \eps^p .
$$
That is,  $[\underline{f}, \overline{f}]$ is an $\eps$-bracket in $L^p (\Omega)$ which contains $f$.
Because there are no more than $\hat{N}_1 \hat{N}_2 \cdots \hat{N}_k$
possibilities for 
$[\underline{f}, \overline{f}]$,
(\ref{bracket-union}) follows.

Now we turn to the proof of (\ref{disjoint-union}).
For any $f\in \cC_r(\Omega)$, and for each $i=1,2,\ldots, k$,
define $n_i(f)$ as the smallest positive integer such that
$$
n_i(f)\ge k\int_{\Omega_i}|f(x)|^rd\lambda(x).
$$
Then, $n_i(f)< k\int_{\Omega_i}|f(x)|^rd\lambda(x)+1$, and using the fact that
$\sum_{i=1}^k\int_{\Omega_i}|f|^rd\lambda(x)\le 1$, we get
 $$
 n_1(f)+n_2(f)+\cdots+n_k(f)\le \sum_{i=1}^k\left( k\int_{\Omega_i}|f(x)|^rd\lambda(x)+1\right)\le 2k.
 $$
 Let
$$
\cI=\set{(n_1,n_2,\ldots, n_k)\in \N^{k}}{n_1+n_2+\cdots+n_k\le 2k}.
$$
For each $I=(i_1,i_2,\ldots, i_k)\in \cI$, define
$$
\cF_I=\set{f\in \cC_r(\Omega)}{n_j(f)=i_j, 1\le j\le k}.
$$
Then we have
$\cC_r(\Omega)=\cup_{I\in \cI}\cF_I$. Thus,
$$
N(\eps, \cC_r(\Omega),\|\cdot\|_{L^p(\Omega)}) \le \sum_{I\in \cI}N(\eps, \cF_I,\|\cdot\|_{L^p(\Omega)}).
$$
Note that for each $j=1,2,\ldots, k$, $\cF_I\subset (i_j/k)^{1/r}\cC_r(\Omega_j)$. Thus,
\begin{align*}
N(\eta_j,\cC_r(\Omega_j),\|\cdot\|_{L^p(\Omega_j)})
&=N((i_j/k)^{1/r}\eta_j,(i_j/k)^{1/r}\cC_r(\Omega_i),\|\cdot\|_{L^p(\Omega_j)})\\
&\ge N((i_j/k)^{1/r}\eta_j,\cF_I,\|\cdot\|_{L^p(\Omega_j)}).
\end{align*}
Therefore, for each $1\le j\le k$, there exists a set $\cN_j$ of
$Z_j:=N(\eta_j,\cC_r(\Omega_i),\|\cdot\|_{L^p(\Omega_j)})$ elements such that for each $f\in \cF_I$,
there exists $f_j\in \cN_j$ satisfying
$$
\int_{\Omega_j}|f(x)-f_j(x)|^pd\lambda(x)\le (i_j/k)^{p/r}\eta_j^p.
$$
If we define $\hat{f}(x)=f_j(x)$ for $x\in \Omega_j\setminus\cup_{r<j}\Omega_r$, then we have
\begin{align*}
\int_{\Omega}|f(x)-\hat{f}(x)|^p d\lambda(x)
&\le\sum_{j=1}^k (i_j/k)^{p/r}\eta_j^p\\
&\le \left(\sum_{j=1}^k\frac{i_j}{k}\right)^{\frac{p}{r}}\left(\sum_{j=1}^k \eta_j^{\frac{rp}{r-p}}\right)^{1-\frac{p}r}\le 2^{\frac{p}r}\left(\sum_{j=1}^k \eta_j^{\frac{rp}{r-p}}\right)^{1-\frac{p}r}\le\eps^p.
\end{align*}
Since, there are no more than $Z_1Z_2\cdots Z_k$ possibilities for 
$\hat{f}$, we obtain
$$
N(\eps, \cF_I,\|\cdot\|_{L^p(\Omega)})
\le \prod_{i=1}^k N\left ( \eta_i, \cC_r(\Omega_i), \| \cdot \|_{L^p (\Omega_i)} \right ).
$$
Note that $\cI$ has 
cardinality ${{2k}\choose{k}}<4^k$, and hence (\ref{disjoint-union}) follows.
\end{proof}

\subsection{With Finitely Many Facets}
In this subsection, we derive a metric entropy upper bound for ${\cC}_r (\Omega)$ 
when $\Omega$ has finitely many facets.
The idea is as follows: First, we pare off the boundary of $\Omega$ to get a smaller set on which
the functions are uniform Lipschitz, and can be taken care of by Lemma~\ref{D0}.
Next, we decompose the pared-off part into several smaller polytopes with a bounded number of
facets, and handle each of the smaller polytopes by scaling and paring of the boundary, and so on.
The final estimate is obtained by using Lemma~\ref{D0} and Lemma~\ref{union}, followed by iteration.
Readers who have little interest in the specific dependence of constants on the number of facets
can assume that $\Omega$ is a $d$-simplex with $d+1$ facets. For general polytopes, one can
triangulate it into $d$-simplices and apply Lemma~\ref{union} to get the estimate in
the next subsection. The only loss is that the constant obtained that way may be bigger than the one
derived directly from the the number of facets in some cases.

We first prove the upper bound with constant $Ck^\gamma$ with
some $\gamma>1$ for a closed convex polytope with $k$ facets.
We will use it later only for the case $k=d+1$.
However, since the proof is the same, we prove it for the general $k$.

By scaling, we can assume that $\Omega$ is contained in unit $d$-cube with volume at least $1/d!$.
Thus, there exists a point $O\in \Omega$ such that the distance between $O$
and the boundary of $\Omega$ is at least $\delta_0:=1/(2d d!)$.
This is because the boundary of $[0,1]^d$ has $(d-1)$-dimensional area
$2d$, and its projection onto $\Omega$ is a
contraction, thus, the boundary of $\Omega$  has $(d-1)$-dimensional area at most $2d$,
and by a Bonnesen-style inequality (Corollary 2, page 25 of \cite{MR519520}) the inradius of
$\Omega$ is at least its volume divided by the $(d-1)$-dimensional surface area of its boundary,
i.e. the inradius is at least $\delta_0$.

By otherwise using a translation, we can assume that $O$ is the origin.
Let $F_i$ be the $i$-th facet of $\Omega$ for $i = 1, \ldots , k$.
Let $V_i$ denote the convex hull of $F_i$ and $O$.
Then, $V_i$, $i \in \{ 1, \ldots , k \}$, form a partition of $\Omega$.
For $\delta<\delta_0 := \frac1{2d^2d!}$, let $D_0:=(1- \delta/\delta_0)\Omega$.
Define $\Omega_{i}=V_i\setminus  D_0^\circ$, where $D_0^\circ$ denotes the interior of $D_0$.
Then we have
$$
\Omega=D_0\cup \Omega_1\cup\Omega_2\cup \cdots \cup\Omega_k.
$$
Note that each $\Omega_i$ has no more than $k+1$ facets.
To see this, we first observe $V_i$ has at most $k$ facets.
Indeed, each of the facets of $V_i$ besides $F_i$ is the
convex hull of a $(d-2)$-dimensional face of $F_i$ and $O$.
However, each $(d-2)$-dimensional face of $F_i$ corresponds to the intersection of
$F_i$ and another facet of $\Omega$.
Thus, the number of $(d-2)$-dimensional faces of $F_i$ is at most $k-1$.
Therefore, the number of facets of $V_i$ is at most $k$.
Notice that $\Omega_i$ has one more facet than $V_i$.
Hence, the number of facets of $\Omega_i$ is at most $k+1$.
By (\ref{disjoint-union}) we have
\begin{align}
 N(\eps, \cC_r(\Omega), \cdot\|_{L^p(\Omega)})
 \le 4^{k+1}
 N(\eta_0, \cC_r(D_0), \| \cdot\|_{L^p(D_0)})
 \prod_{i=1}^{k}  N(\eta_i, \cC_r(\Omega_i), \| \cdot\|_{L^p(\Omega_i)}),\label{faces}
 \end{align}
where $\eta_0=2^{-\frac1p}\eps$, and
$$
\eta_i=2^{-\frac1p}\left(\frac{|\Omega_i|}{\sum_{i=1}^k |\Omega_i| }\right)^{\frac1p-\frac1r} \epsilon .
$$
Because $D_0\subset \Omega_\delta$, by Lemma~\ref{D0}, we have
$$
\log N(\eps, \cC_r(\Omega), \| \cdot\|_{L^p(D_0)})\le C_2\delta^{-\frac{d}{2}-\frac{d^2}{r}}(\eps)^{-d/2}.
$$
On the other hand, if we let $T_i$ be an affine transform that maps
$\Omega_i$ into $[0,1]^d$ so that the volume of $T_i(\Omega_i)$ is at least $1/d!$,
then by scaling (\ref{rescaling}), and using the fact that
$$
\sum_{i=1}^k|\Omega_i|=|\Omega\setminus(1-\delta/\delta_0)\Omega|
=[1-(1-\delta/\delta_0)^d]|\Omega|\le d \delta/\delta_0 ,
$$
 we have for each $1\le i\le k$,
\begin{align*}
N(\eta_i, \cC_r(\Omega_i), \| \cdot\|_{L^p(\Omega_i)})
\le N(K\eps, \cC_r(T_i(\Omega_i)), \| \cdot\|_{L^p(T_i(\Omega_i))}),
\end{align*}
where
$$K=2^{-1/p}[2d^2(d!)^2\delta]^{\frac1r-\frac1p}.$$
Plugging into (\ref{faces}), we obtain
\begin{align}
 \log N(\eps, \cC_r(\Omega), \| \cdot\|_{L^p(\Omega)})
 \le & \ (k+1)\log 4+C_2\delta^{-\frac{d}{2}-\frac{d^2}{r}}\eps^{-d/2}\nonumber\\
 & + \ \sum_{i=1}^k\log N(K\eps, \cC_r(T_i(\Omega_i)), \| \cdot\|_{L^p(T_i(\Omega_i))}).
\label{iteration1}
\end{align}
Now let $\cF_k$ consist of all closed convex sets in $[0,1]^d$ with at most $k$ faces and with
volume at least $1/d!$, 
and define
$$
g(k,\eps)=\sup\set{\log N(\eps, \cC_r(\Omega), \|\cdot\|_{L^p(\Omega)})}{\Omega\in \cF_k}.
$$
For notational simplicity, we denote
$M=C_2\delta^{-\frac{d}{2}-\frac{d^2}{r}}$.
Then (\ref{iteration1}) together with the fact that $(k+1)\log 4\le 4k-4$
(which follows from the fact that $k\ge 3$) implies
\begin{align}
g(k,\eps)+4 \le M\eps^{-d/2}+k[g(k+1, K\eps)+4], \label{iteration2}
\end{align}
which is equivalent to
 \begin{align*}
[g(k,\eps)+4]\eps^{d/2}\le M+\frac{k}{K^{d/2}}[g(k+1, K\eps)+4](K\eps)^{d/2}.
\end{align*}
Now, we choose $\delta$ so that $K^{d/2}=2k$. Then
$$M=C_2\delta^{-\frac{d}{2}-\frac{d^2}{r}}=C_3k^{\frac{(r+2d)p}{r-p}}.$$ Thus,
 \begin{align*}
[g(k,\eps)+4]\eps^{d/2}\le C_3k^{\frac{(r+2d)p}{r-p}}+\frac12[g(k+1, (2k)^{2/d}\eps)+4]((2k)^{2/d}\eps)^{d/2}.
\end{align*}
Hence, for any positive integer $m$, we have
$$[g(k,\eps)+4]\eps^{d/2}\le C_3\sum_{j=0}^{m-1}\frac{(k+j)^{\frac{(r+2d)p}{r-p}}}{2^{j}}+2^{-m}[g(k+m,L_m\eps)+4](L_m\eps)^{d/2},$$
where
$$L_m=\prod_{j=0}^{m-1}(2k+2j)^{2/d}.$$
In particular, if we choose $m$  to be the smallest integer so that $L_{m}\eps\ge 1$, then $g(k+m,L_{m}\eps)=0$, and we obtain
$$g(k,\eps)\le C_4 k^{\frac{(r+2d)p}{r-p}}\eps^{-d/2}.$$
This finishes the proof of the upper bound with constant
of the order $k^\gamma$ with $\gamma=\frac{(r+2d)p}{r-p}$.
\hfill $\Box$

\subsection{Upper Bound for Polytopes: Theorem 1 (ii)}
In this subsection, we obtain a metric entropy upper bound for the case when 
$\Omega$ is a convex polytope. Our method is to triangulate $\Omega$ 
into simplices and then use results in the last section and Lemma~\ref{union}.

Note that if $\Omega$ is a convex polytope with $v$ extreme points, then it has
no more than $2 v^{\floor{d/2}}$ facets; see \cite{MR1899299},  
Propositions 5.5.2 and 5.5.3, page 100.
Therefore, we immediately
obtain the upper bound with constant of the order $v^{\gamma\floor{d/2}}$. We show that this estimate can be improved to $v^{\ceil{d/2}}$.
Indeed, if $\Omega$ has $v$ vertices, then it is known that $\Omega$
can be triangulated into $m=O(v^{\ceil{d/2}})$ many $d$-simplices;
this is Corollary~2.3 of \cite{MR815583};  
see also
\cite{MR1620056}.
Thus, we can write
$\Omega=\cup_{i=1}^m D_i$, where $D_i$ are $d$-simplices.
Because each $D_i$ has only $(d+1)$-facets, by what we have proved above it follows that
$$
\log N(\eta_i, \cC_r(D_i),\|\cdot\|_{L^p(D_i)})\le C_5|D_i|^{\frac{d}{2p}-\frac{d}{2r}}\eta_i^{-d/2},
$$
where $C_5$ is a constant depending only on $p,r,d$. Now applying (\ref{disjoint-union}), with
$$
\eta_{i}=2^{-\frac{r-p}{p}}\left(\frac{|D_i|}{|\Omega|}\right)^{\frac1p-\frac1r}\eps,
$$
we immediately obtain
\begin{align*}
\log N(\eps, \cC_r(\Omega),\|\cdot\|_{L^p(\Omega)})&\le \sum_{i=1}^m
\log N(\eta_i, \cC_r(D_i),\|\cdot\|_{L^p(D_i)})\\
&\le C_6 m |\Omega|^{\frac{d}{2p}-\frac{d}{2r}}\eps^{-d/2}\le C_7v^{\ceil{d/2}}\eps^{-d/2}.
\end{align*}
This proves Part (ii) of Theorem~\ref{thm1}. The proof for the statement of the bracketing entropy when $r=\infty$ is similar, and the details are thus omitted.

\subsection{General Upper Bound: Theorem 2}
In this subsection we use simplicial approximation to 
establish an upper bound for the entropy of ${\cC}_r (\Omega)$ when $\Omega $ is a 
general compact convex set with non-empty interior. 

Fix $0<\eps<1$;  we choose smallest integer $s$
so that $2^{-s}|\Omega|\le [2^{-1/p}\eps]^{\frac{rp}{r-p}} | \Omega|$.
By the definition of $S_{\cD}(t,\Omega)$, $\Omega$ contains $m_1\le S_{\cD}(1/2,\Omega)$ $d$-simplices $D_{1,i}$, $1\le i\le m_1$,
so that the volume of $\Omega\setminus \cup_{i=1}^{m_1}D_{1,i}$ is at most
$2^{-1}|\Omega|$, and
the set $\Omega\setminus \cup_{i=1}^{m_1}D_{1,i}$ contains $m_2=S_{\cD} (1/4, \Omega)-m_1<S_{\cD}(1/4, \Omega)$
$d$-simplices $D_{2,j}, 1\le j\le m_2$, so that the volume of
$$
\Omega\setminus \cup_{i=1}^2\cup_{j=1}^{m_i}D_{i,j}
$$
is at most $2^{-2}|\Omega|$.
Continuing this way, we obtain a sequence of $d$-simplices
$D_{i,j}, 1\le j\le m_i, 1\le i\le s$ that are packed in $\Omega$ so that the
uncovered volume of $\Omega$ is at most $2^{-s}|\Omega|$. If we denote
$$
\widehat{\Omega}_i=\cup_{k=1}^i\cup_{j=1}^{m_k}D_{k,j},
$$
then for all $f\in \cC_r(\Omega)$,
$$
\int_{\Omega\setminus\widehat{\Omega}_{s}}|f|^p d\lambda
\le |\Omega\setminus\widehat{\Omega}_{s}|^{1-\frac{p}r}\le \frac{\eps^p}{2}|\Omega|^{1-\frac{p}{r}}.
$$
Hence,
\begin{align}
N(\eps|\Omega|^{\frac1p-\frac1r},\cC_r(\Omega),\|\cdot\|_{L^p(\Omega)})
\le N(2^{-1/p}\eps|\Omega|^{\frac1p-\frac1r},\cC_r(\widehat{\Omega}_{s}),\|\cdot\|_{L^p(\widehat{\Omega}_{s})}).\label{hat-Omega}
\end{align}
Next, we choose
$$
\eta_{i,j}=2^{-1/p}\left(\frac{|D_{i,j}|}{\sum_{j=1}^{m_i}|D_{i,j}|}\cdot
\frac{\alpha_i}{\sum_{k=1}^s\alpha_k} \right)^{\frac1p-\frac1r}\cdot\frac{\eps}{2}|\Omega|^{\frac1p-\frac1r},
$$
where
$$
\alpha_i :=(2^{-i}|\Omega|)^{1-\beta}[S_{\cD} (2^{-i},\Omega)]^{\beta}, \ \ \beta:= \frac{2pr}{2pr+(r-p)d}.
$$
Using the fact that
$\sum_{j=1}^{m_i}|D_{i,j}|\le 2^{-(i-1)} |\Omega|$, we have
$$\eta_{i,j}|D_{i,j}|^{\frac1r-\frac1p}\ge 2^{-1/p}
\left(\frac{\alpha_i}{2^{-(i-1)} |\Omega|\sum_{k=1}^s\alpha_k}\right)^{\frac1p-\frac1r}\cdot\frac{\eps}{2}|\Omega|^{\frac1p-\frac1r}.
$$
Thus, together with the fact that $m_i\le S_{\cD} (2^{-i},\Omega)$, we have
\begin{eqnarray*}
\lefteqn{\sum_{j=1}^{m_i}\log N(\eta_{i,j},\cC_r(D_{i,j}),\|\cdot\|_{L^p(D_{i,j})})} \\
&\le & S_{\cD} (2^{-i},\Omega)\cdot c\left[2^{-1/p}\left(\frac{\alpha_i}{2^{-i}|\Omega|
             \sum_{k=1}^s\alpha_k}\right)^{\frac1p-\frac1r}\cdot\frac{\eps}{2} |\Omega|^{\frac1p-\frac1r}\right]^{-d/2}\\
&= & c2^{\frac{d}2+\frac{d}{2p}}\left(\sum_{k=1}^{s}\alpha_k\right)^{\frac{(r-p)d}{2pr}}
          \alpha_i\cdot [\eps |\Omega|^{\frac1p-\frac1r}]^{-d/2}.
\end{eqnarray*}
Therefore, by (\ref{disjoint-union}) and (\ref{hat-Omega}) we have,
\begin{eqnarray*}
\lefteqn{\log N(\eps |\Omega|^{\frac1p-\frac1r}, \cC_r(\Omega), \|\cdot\|_{L^p(\Omega)})} \\
&\le & \log 4\sum_{i=1}^sS_{\cD} (2^{-i},\Omega)+c2^{\frac{d}2+\frac{d}{2p}}
          \left(\sum_{k=1}^{s}\alpha_k\right)^{1/\beta}\cdot [\eps |\Omega|^{\frac1p-\frac1r}]^{-d/2}.
\end{eqnarray*}

Let $\gamma := rp/(r-p)$.
Note that  $2^{-s} \le [2^{-1/p} \epsilon ]^{\gamma} \le 2^{-(s-1)}$,
and $S_{\cD}(t,\Omega) \ge S_{\cD}(2^{-i},\Omega)$ for $t \in [2^{-(i+1)},2^{-i}]$.  Thus it follows that
\begin{eqnarray*}
\sum_{i=1}^s \alpha_i
& = & \sum_{i=1}^s \left ( 2^{-i} | \Omega | \right )^{1-\beta} S_{\cD} (2^{-i} , \Omega )^{\beta} \\
& = & | \Omega |^{1-\beta} \sum_{i=1}^s 2^{-i} \left ( \frac{S_{\cD} (2^{-i}, \Omega )}{2^{-i}} \right )^{\beta}  \\
& = & 2 | \Omega |^{1-\beta} \sum_{i=1}^s \int_{2^{-i-1}}^{ 2^{-i}}  \left ( \frac{S_{\cD}(2^{-i}, \Omega )}{2^{-i}} \right )^{\beta}  dt \\
& \le  &  2 | \Omega |^{1-\beta} \sum_{i=1}^s
                 \int_{2^{-i-1}}^{ 2^{-i}}  \left ( \frac{S_{\cD}(t, \Omega )}{t} \right )^{\beta}  dt \\
& = &  2 | \Omega |^{1-\beta}
                 \int_{2^{-s-1}}^{2^{-1}}  \left ( \frac{S_{\cD}(t, \Omega )}{t} \right )^{\beta}  dt\\
 & \le & 2 | \Omega |^{1-\beta}
                 \int_{2^{-2} \cdot [2^{-1/p}\eps]^{\gamma}}^1  \left ( \frac{S_{\cD}(t, \Omega )}{t} \right )^{\beta}  dt  .
\end{eqnarray*}
Hence,
$$
c2^{\frac{d}2+\frac{d}{2p}}
          \left(\sum_{k=1}^{s}\alpha_k\right)^{1/\beta}\cdot [\eps |\Omega|^{\frac1p-\frac1r}]^{-d/2}
\le c2^{\frac{d}2+\frac{d}{2p}+\frac1\beta}
      \left(\int_{\delta(\eps)}^{1}\left(\frac{S_{\cD}(t,\Omega)}{t}\right)^{\beta}dt\right)^{1/\beta}\cdot\eps^{-d/2},
$$
where $\delta(\eps)=2^{-2} \cdot [2^{-1/p}\eps]^{\gamma}$.

Similarly,
$$
\sum_{i=1}^sS_{\cD} (2^{-i},\Omega)
\le  2 \int_{2^{-2} \cdot [2^{-1/p}\eps]^{\gamma}}^1 \frac{S_{\cD}(t,\Omega)}{t}dt.
$$
Hence, we obtain
\begin{eqnarray*}
\lefteqn{\log N(\eps|\Omega|^{\frac1p-\frac1r},\cC_r(\Omega), \|\cdot\|_{L^p(\Omega)})} \\
&\le & C\int_{\delta(\eps)}^{1}\frac{S_{\cD}(t,\Omega)}{t}dt
   + C\left(\int_{\delta(\eps)}^{1}\left(\frac{S_{\cD}(t,\Omega)}{t}\right)^{\beta}dt\right)^{1/\beta}\cdot\eps^{-d/2}
\end{eqnarray*}
with $C=\max\left\{2\log 4, c2^{\frac{d}2+\frac{d}{2p}+\frac1\beta}\right\}$.

\subsection{Worst Case Upper Bound: Theorem 3}
In this subsection, we will compute a concrete general metric entropy upper bound using Theorem~\ref{thm2}.

From Example~\ref{example} in the introduction, we know that when $\Omega$ is the closed unit ball
$B_d (0,1)$ in $\R^d$ with $d=2$, then there exists a simplicial approximation sequence $\cD=\{D_1,D_2,\ldots\}$ such that $S_{\cD}(t,\Omega)=O(t^{-\frac{d-1}{2}})$ for all $0<t<1$.

Now, we assume that $\Omega\subset B_d(0,1)$ is a general closed convex set with
non-empty interior, where $B_d(z,r)$ is the closed ball in $\R^d$ with radius $r$ and center at
$z$. Instead of constructing each simplex in the sequence individually,
we will construct a sequence of inscribed polytopes, and triangulate them into simplices.
To construct these polytopes, we do not work on $\Omega$ directly. Instead,
we first let $\widetilde{\Omega}=\Omega+B_d(0,1)$, and for any $0<t<1$, we use a known result
(See e.g. the proof of Lemma 8.4.14 of \cite{MR3445285}; or Lemma 10 of \cite{CG} when
$\Omega$ is only of positive reach. Actually, \cite{MR3445285} proved for $\Omega+B_d(0,2)$ instead
of $\widetilde{\Omega}=\Omega+B_d(0,1)$, but with a slightly different construction, the statement also holds.):

\noindent {\it There exists a simplicial sphere $\widetilde{P}$ (an inscribed convex
polytope in $\widetilde{\Omega}$ whose facets are $(d-1)$-simplices) with $O(t^{-(d-1)/2})$
facets such that $\widetilde{\Omega}\subset \widetilde{P}+B_d(0,t)$.}

\noindent Then, we use $\widetilde{P}$ to construct a simplicial sphere $P$ in
$\Omega$ with $O(t^{-(d-1)/2})$ facets such that $\widetilde{\Omega}\subset \widetilde{P}+B_d(0,t)$.
Finally, we triangulate a sequence of such simplicial spheres to construct a simplicial approximation
sequence $\cD$ for $\Omega$, such that
$S_{\cD}(t,\Omega)=O(t^{-\frac{d-1}{2}})$ for all $0<t<1$.

For the convenience of  readers who are interested in knowing how the
simplicial spheres are constructed, we provide a proof for the aforementioned known result:
Since $\Omega\subset B_d(0,1)$, we have $\widetilde{\Omega} \subset [-2,2]^d$. For any integer $n>1$, we divide
each facet of $[-2,2]^d$ into $(4n)^{d-1}$ closed $(d-1)$-cubes
of side-length $1/n$.
Each of these small $(d-1)$-cubes can be triangulated into no more
than $d!$ closed $(d-1)$-simplices. Thus, the boundary of $[-2,2]^d$ can be triangulated into
$m_k\le (4n)^{d-1}dd!$ closed $(d-1)$-simplices, each of which has diameter
most $\sqrt{d}/n$. Let $K_i$, $1\le i\le m_k$ be these simplices. Clearly, the set of all
vertices of these simplices forms a $\sqrt{d}/n$-net of the boundary of $[-2,2]^d$.
Each $K_i$ has $d$ vertices. The projections of these vertices onto $\widetilde{\Omega}$
form a $(d-1)$-simplex with vertices on the boundary of $\widetilde{\Omega}$.
Denote this $(d-1)$-simplex by $\widetilde{\Delta}_i$. Because a projection onto a
convex set is a contraction, the diameter of $\widetilde{\Delta}_i$ is no larger than $\sqrt{d}/n$,
and the set of all vertices of these simplices forms a $\sqrt{d}/n$-net of the boundary of $\widetilde{\Omega}$.
Let $\widetilde{P}$ be the convex hull of $\widetilde{\Delta}_i, 1\le i\le m_k$. Then $\widetilde{P}$ is a simplicial
sphere contained in $\widetilde{\Omega}$ with $m_k\le (4n)^{d-1}dd!$ facets, each of which has a diameter
no larger than $\sqrt{d}/n$. Furthermore, $\widetilde{\Omega}\subset \widetilde{P}+B_d(0,\sqrt{d}/n)$,
which implies that $\Omega$ is contained in the interior of $\widetilde{P}$ if $n>\sqrt{d}$.
We show that for $n>\sqrt{d}$, we actually have $\widetilde{\Omega}\subset \widetilde{P}+B_d(0,d/n^2)$.
Indeed, for any $x$ on the boundary of $\widetilde{\Omega}$, by the definition of $\widetilde{\Omega}$,
there exists $y\in \Omega$, such that ${\rm dist}(y,\partial \widetilde{\Omega})={\rm dist}(x,y)=1$.
Because $y$ is an interior point of $\widetilde{P}$. The line segment $yx$ intersects the boundary of
$\widetilde{P}$ at some point say $u$. Let $F$ be a facet of $\widetilde{P}$ that contains $u$.
Consider the hyperplane passing through $u$ and orthogonal to $xy$.
Because all the vertices of $F$ are outside the interior of the ball $B_d(y,1)$, if ${\rm dist}(y,u)<\sqrt{1-d/n^2}$,
then the vertices and $y$ must lie on different sides of the hyperplane $H$.
Since $u$ is a convex combination of these vertices, $u$ cannot lie on the hyperplane $H$, which is a contradiction.
Thus,
$$
{\rm dist}(x,u)={\rm dist}(x,y)-{\rm dist}(u,y)\le 1-\sqrt{1-d/n^2}<d/n^2.
$$
Therefore, ${\rm dist}(x,\widetilde{P})<d/n^2$. Consequently,
$\widetilde{\Omega}\subset \widetilde{P}+B_d (0,d/n^2)$.
In particular, for any $0<t<1$, we choose $n$ as the smallest positive integer such that $d/n^2<t$.
Then the simplicial sphere $\widetilde{P}$ has $O(t^{-(d-1)/2})$ facets, and the claim follows.\\

Now we construct a simplicial sphere $P$ on $\Omega$ with $O(t^{-{(d-1)}/2})$ facets
such that $\Omega\subset P+B(0,t)$. For each facet $\widetilde{\Delta}_i$ of  $\widetilde{P}$,
we project its $d$ vertices onto $\Omega$. The projections of these vertices onto $\Omega$ form a
$(d-1)$-simplex. (If it is degenerate,  we simply do not include it in our next step). Denote it by $D_i$.
Let $P$ be the convex hull of these $D_i$. Thus $P$ is a simplicial sphere in $\Omega$ with
$O(t^{-{(d-1)}/2})$ facets. It remains to show that $\Omega\subset P+B_d(0,t)$.
Indeed, for any $U$ on the boundary of $\Omega$, let $V$ be the projection of $U$ onto $P$.
Suppose ${\rm dist}(U,V)>t$. The ray starting from $V$ and containing $U$ intersects the boundary of
$\widetilde{\Omega}$ at some point $W$. Since $P$ is convex,
${\rm dist}(W, P)={\rm dist}(W,U)+{\rm dist}(U,V)>1+t$. Thus $W\notin P+B_d(0,1+t)$. However,
$$P+B_d(0,1+t)=P+B_d(0,1)+B_d(0,t)\supset \widetilde{P}+B_d(0,t)\supset\widetilde{\Omega}\ni W.$$
This is a contradiction.
Hence ${\rm dist}(U,V)\le t$. This implies that for any $U\in \partial \Omega$,
${\rm dist}(U,P)\le t$. Therefore, $\Omega\subset P+B_d(0,t)$.

From what we have proved so far, we can summarize that for any $d/n^2<1$,
there exists a simplicial sphere $P_n$ that contains no more than $(4n)^{d-1}dd!$
facets of diameter at most $\sqrt{d}/n$ such that $\Omega\subset P+B_d(0,d/n^2)$.
Furthermore, each facet of $P_n$ is generated through a $(d-1)$-simplex that is
contained in a $(d-1)$-cube of edge-length $1/n$ on the boundary of $[-2,2]^d$.\\

Now, we construct a simplicial approximation sequence of $\Omega$ as follows.
Let $k=\floor{\sqrt{d}}+1$. Then $d/k^2<1$. Let $O$ be a fixed interior point of $P_k$.
For each facet $F_i$ of $P_k$, let $D_i$ be the convex hull of $F_i$ and $O$.
Thus, $D_i$ is a $d$-simplex, and $P_k$ can be partitioned into $d$-simplices
$D_1, D_2, \ldots, D_{s_1}$, where $s_1\le (4k)^{d-1}dd!$ is the number of facets in $P_k$.

Consider the set $P_{2k}\setminus P_k$. For each facet $J_i$ of $P_{2k}$, let $Q_i$ be the
convex hull of $J_i$ and $O$. Because $J_i$ is generated by a $(d-1)$-simplex contained in a
$(d-1)$-cube of edge-length $6/(2k)$ on the boundary of $[-2,2]^d$, which only intersects with
no more than $d!$ $(d-1)$-simplices that generate the facets of $P_{k}$, the $d$-simplex 
$Q_i$ intersects with at most $d!$ facets of $P_k$. Thus each set 
$Q_i\cap(P_{2k}\setminus P_k)$ can be triangulated into at most $c(d)$ $d$-simplices, 
where $c(d)$ is a constant depending only on $d$. Consequently, $P_{2k}\setminus P_k$ 
can be triangulated into no more than $(8k)^{d-1}dd!\cdot c(d)$ simplices. 
Denote these simplices by $D_{s_1+1}, D_{s_1+2}, \ldots, D_{s_2}$.

We continue this process for $P_{3k}\setminus P_{2k}$ and so on to obtain a
simplicial approximation sequence $\cD=\{D_1, D_2, \ldots\}$.
Now we estimate $S_{\cD}(t,\Omega)$. For any $0<t<1$, we choose $r$ to be the
smallest integer such that $\sigma_{d-1}d2^{-2(r-1)}<t$, where $\sigma_{d-1}$ is the
surface of $d$-dimensional unit surface. Thus, for $n\ge s_r$, we have
$$|\Omega\setminus \cup_{i=1}^nD_i|\le |\Omega\setminus P_{2^{r-1}k}|\le \sigma_{d-1}\cdot d2^{-2(r-1)}<t.$$
Hence,
\begin{eqnarray*}
S_{\cD}(t,\Omega)
& \le & s_r 
      \le  (4k)^{d-1}dd!+ (8k)^{d-1}dd!\cdot c(d)+\cdots +(2^{r-1}\cdot 2k)^{d-1}dd!\cdot c(d)\\
& \le & k_d t^{-(d-1)/2},
\end{eqnarray*}
 where $k_d$ is a constant depending only on $d$.

A direct computation of the integrals in Theorem~\ref{thm2} gives the concrete upper bounds stated in Theorem~\ref{thm3}. (When $p<\frac{dr}{d+(d-1)r}$, the term
$\eps^{-d/2}$ comes from the second integral.)

The proof for the statement of the bracketing entropy when $r=\infty$ is similar, and the details are thus omitted.

\subsection{General Lower Bound: Theorem 1 (i)}
In this subsection, we prove the general lower bound stated in Theorem~\ref{thm1} (i).
By Lemmas~\ref{box} and~\ref{scaling}, we only need to prove it for the case when
$\Omega$ is contained in $[0,1]^d$ and has volume at least $1/d!$.
Indeed, by Lemma~ \ref{box}, if $\Omega \subset \R^d$ is closed
and convex, $\Omega \subset R$ for a box $R$ with
$|R| \le  d! | \Omega |$. Let $T$ be any affine transformation that maps $R$ onto $[0,1]^d$,
Then by Lemma~\ref{scaling}, and the fact that $C_r(T(\Omega))\supset C_\infty(T(\Omega))$, we have
\begin{align*}
N(|\Omega|^{\frac1p-\frac1r}\eps , \cC_{r} (\Omega), \| \cdot \|_{L^p (\Omega)})
&= N( \left(|\Omega|/|R|\right)^{\frac1p-\frac1r}\eps, \cC_r (T(\Omega) ), \| \cdot \|_{L^p (T(\Omega))})\\
&\ge  N(\eps, \cC_\infty (T(\Omega) ), \| \cdot \|_{L^p (T(\Omega))}).
\end{align*}
Thus it suffices to establish a lower bound for the case when
$\Omega$ is contained in $[0,1]^d$ and has volume at least $1/d!$.

We choose a function $f$ so that $f$ is supported on $[0,1]^d$, with $0\le f\le \frac1{20}$
and $\|f\|_1\ge \frac{1}{80d}$. Furthermore, the Hessian matrix of $f$ at every
$(x_1,x_2,\ldots,x_d)\in[0,1]^d$ is a diagonal matrix with each entry bounded by $1$.
One such function is
$$
f(x_1,x_2,\ldots,x_d)=\twopiece[1]{\frac1{20d}\sum_{i=1}^d
          \sin^3(\pi x_i)}{(x_1,x_2,\ldots,x_d)\in[0,1]^d}{0}{(x_1,x_2,\ldots,x_d)\notin[0,1]^d}.
$$
For each fixed $0<\eps<(10 d!)^{-2}$, and each $I=(i_1,i_2,\ldots,i_d)\in \N^d$, define
$$
f_I(x_1,x_2,\ldots, x_d)=\eps^2\cdot f\left(\frac{x_1-i_1\eps}{\eps},
            \frac{x_2-i_2\eps}{\eps},\ldots,\frac{x_d-i_d\eps}\eps\right).
$$
Then, $f_I$ is supported on
$$
B_I:=[i_1\eps,(i_1+1)\eps]\times [i_2\eps,(i_2+1)\eps]\times\cdots\times [i_d\eps,(i_d+1)\eps]
$$
with $0\le f_I\le \frac{\eps^2}{20}$, $\|f_I \|_1\ge \frac{\eps^2}{80d}\cdot \eps^{d}$,
and furthermore, the Hessian matrix of $f_I$ at every
$(x_1,x_2,\ldots,x_d)\in B_I$ is a diagonal matrix with each entry bounded by $1$.

Denote $$\cI=\set{I}{I=(i_1,i_2,\ldots,i_d)\in\N^d, B_I\subset \Omega}.$$
Let $\xi_I\in \{0,1\}$, $I\in \cI$ be i.i.d. random variables with $\P(\xi_I=1)=\P(\xi_I=0)=1/2$, and define
the random function
$$
F(x;\xi)=\sum_{I\in \cI} \xi_I f_I(x).
$$
Then for each realization of $\xi=(\xi_I)_{I\in \cI}$, we have $0\le F\le \frac{\eps^2}{20}$,
and the Hessian matrix of $F$ is diagonal with each entry bounded by $1$.
Therefore, for each realization of $\xi$ the function
$$
G(x;\xi)=\frac1d\left(x_1^2+x_2^2+\cdots+x_d^2-F(x;\xi)\right)
$$
is convex and bounded by $1$. Hence, $G(\cdot;\xi)\in \cC_\infty([0,1]^d)$.

There are $2^{|\cI|}$ realizations of $G(\cdot;\xi)$. Between two realizations, we define the Hamming distance
$$
H(G(\cdot;\xi^{(1)}), G(\cdot;\xi^{(2)}))=\#\set{I\in \cI}{\xi^{(1)}_I\ne\xi^{(2)}_I}.
$$
For $r=\floor{|\cI|/10}$, consider the set
$$
U(G(\cdot;\xi),r)=\left \{ G(\cdot;\xi^{(2)}) : \ H(G(\cdot;\xi), G(\cdot;\xi^{(2)}))\le r \right \} .
$$
For each $G(\cdot;\xi)$, the set $U(G(\cdot;\xi),r)$ contains no more than
$$
\sum_{k=0}^r{{|\cI|}\choose{k}}\le 2^{9|\cI|/10}
$$
elements. Thus, by the pigeonhole principle, we can find
$m\ge 2^{|\cI|}\div 2^{9|\cI|/10}=2^{|\cI|/10}$ realizations of
$G(\cdot;\xi^{(k)})$, $1\le k\le m$, such that for any $1\le i<j\le m$, we have
$$
H(G(\cdot;\xi^{(i)}), G(\cdot;\xi^{(j)}))\ge \floor{|\cI|/10}.
$$
Note that
\begin{align}
\int_{\Omega}\left|G(x;\xi^{(i)})-G(x;\xi^{(j)})\right| d\lambda(x)
    =& \ \frac1{d^2}\int_{\Omega}\sum_{I\in \cI}|\xi_I^{(i)}-\xi_I^{(j)}||f_I(x)|d\lambda(x)\nonumber\\
\ge & \ \frac1{d} \sum_{I\in \cI}|\xi_I^{(i)}-\xi_I^{(j)}|\frac{\eps^2}{80d}\cdot \eps^{d}\nonumber\\
\ge & \ \frac1{d}\cdot\floor{|\cI|/10}\cdot \frac{\eps^2}{80d}\cdot \eps^{d}.\label{L2distance}
\end{align}
We show that the cardinality $|\cI|$ of $\cI$ is at least $\frac1{2d!}\eps^{-d}$.
Indeed, because $\Omega\subset [0,1]^d$ has volume at least $1/d!$, and
$\Omega$ is convex, so the set $[0,1]^d\setminus \Omega_{\sqrt{d}\eps}$ has volume at most
$1-1/d!+2d\cdot \sqrt{d}\eps$. Thus, $[0,1]^d\setminus \Omega_{\sqrt{d}\eps}$ contains no more than
$\eps^{-d}\cdot [1-1/d!+2d\cdot \sqrt{d}\eps]$ cubes $B_I$. Any cube $B_I\subset [0,1]^d$
that is not contained in $[0,1]^d\setminus \Omega_{\sqrt{d}\eps}$ does not intersect with
$[0,1]^d \setminus \Omega$, thus must be contained in $\Omega$.
Since $[0,1]^d$ contains $\floor{1/\eps}^d$ such cubes, and we conclude that $\Omega$ contains at least
$$
\floor{1/\eps}^d-\eps^{-d}\cdot \left[1-1/d!+2d\cdot \sqrt{d}\eps\right]\ge \frac1{2d!}\eps^{-d}
$$
cubes provided that $\eps$ is small, say $\eps<(10d!)^{-2}$.

Now plugging the inequality $|\cI|\ge \frac{1}{2d!}\eps^{-d}$ into (\ref{L2distance}), we obtain
$$
\int_{\Omega}\left|G(x;\xi^{(i)})-G(x;\xi^{(j)})\right|d\lambda(x)\ge c\eps^2,
$$
for some constant $c$ depending only on $d$. This implies that $\cC_\infty([0,1]^d)$ contains
$$
m\ge 2^{|\cI|/10}\ge e^{c'\eps^{-d}}
$$
functions whose mutual $L^1(\Omega)$ distance is at least $c \eps^2$.
This implies that
$$
\log N(\eps, \cC_\infty([0,1]^d),\|\cdot\|_{L^1(\Omega)})\ge c''\eps^{-d/2}
$$
for some $c''>0$ depending only on $d$.

Since $|\Omega|\ge \frac1{d!}$, for any $p\ge 1$, we have
$\|\cdot\|_{L^p(\Omega)}\ge (d!)^{-\frac{p-1}{p}} \|\cdot\|_{L^1(\Omega)}$, this implies that
$$
\log N(\eps, \cC_\infty([0,1]^d),\|\cdot\|_{L^p(\Omega)})\ge c\eps^{-d/2}
$$
for some constant $c$ depending on $p$ and $d$, provided that $|\Omega|\ge \frac1{d!}$.

Together with the discussion at the beginning of this subsection,
and the fact that bracketing entropy is bounded below by metric entropy
we conclude that the lower bound statements of Theorem~\ref{thm1} are true.

\subsection{Lower Bound for the Ball: Theorem 4}
The $(d-1)$-dimensional area of the unit sphere $S^{d-1}$ in $\R^d$ is $2 \pi^{d/2} / \Gamma (d/2)$,
while the $(d-1)$-dimensional area of a cap with height $h$ is
$(\pi^{d/2}/ \Gamma (d/2)) I_{2h-h^2} ((d-1)/2, 1/2) \sim c_d h^{(d-1)/2}$ where
$I_x (a,b)$ is the regularized incomplete beta function.  Thus there exist
$s:= \alpha_d h^{- (d-1)/2}$ disjoint spherical caps with height $h$.
The $d$-dimensional volume of each spherical cap is $\beta_d h^{(d+1)/2}$.
Let $x_1, \ldots , x_s$ be the spherical center of the caps.
For each $1 \le i \le s$, we define a random function $f_i$ on the closed unit ball $\Omega = B_d (0,1)$ such that, for $y \in B_d (0,1)$,
\begin{eqnarray*}
f_i (y) = \left \{ \begin{array}{l l} 0, & \langle y,x_i \rangle \le 1-h, \\
                                                  \xi_i \frac{\langle y,x_i \rangle - (1-h)}{h}, & \langle y,x_i \rangle > 1-h ,
                       \end{array} \right .
\end{eqnarray*}
where $\xi_i$ is either $0$ or $1$.
Now $f_i$ is convex on the closed unit ball, and supported on the $i$-th cap $C_i$;  $f_i (y) = 0 $ if $y/\| y\|_2 \notin C_i$.
Furthermore, since the caps are disjoint, the sum $f = \sum_{i=1}^s f_i$ is also convex and bounded
by $1$.
There are $2^s$ different possibilities for $f$.
By the same argument as we
used in the proof of the lower bound of Theorem~\ref{thm1}, we can find a set
$W$ of $2^{s/2}$ functions in which any two functions $f$ and $g$ are different on at least $s/10$ caps.

On each cap where the two functions are defined differently, $|f - g| \ge 1/2$ the top half height of the cap which has
a volume $\gamma_d h^{(d+1)/2}$.
Consequently the $L^p $ distance between any two functions $f,g \in W$ is at least
\begin{eqnarray*}
\frac{1}{2} ( s/10 \cdot \gamma_d h^{(d+1)/2} )^{1/p} \ge \delta_d h^{1/p} .
\end{eqnarray*}
Letting $\delta_d h^{1/p} = \eps$ we have
\begin{align*}
N( \eps , \cC_\infty(B_d(0,1)), \| \cdot \|_{L^p (B_d (0,1)) } ) \ge \exp \left ( C \eps^{- (d-1)p/2 } \right ) .
\end{align*}
When $(d-1)p\le d$, the lower bound above should be replaced
by the universal lower bound $\eps^{-d/2}$ proved in the last section.

\subsection{Optimality of Lower Bound for Polytopes: Remark 1}
Clearly, it is enough to show (\ref{linear}) for the case $r=\infty$, and $p=1$. Let $\Omega$ be the regular regular $(m+2)$-gon inscribed in the unit circle, which can be triangulated into $m$ triangles.
If $m<4$, the statement simply follows from Theorem~\ref{thm1} (i) with $c_2=c_1/6$. If $m\ge 4$, by connecting every other vertices, we can cut off $n=\floor{m/2}$ isosceles triangles from $\Omega$. Denote these isosceles triangles by $\Delta_i$, $1\le i\le n$. Each $\Delta_i$ has base-length $2\sin(\frac{\pi}{m+2})$, and height $1-\cos(\frac{\pi}{m+2})$. If $\eps<\frac14 m^{-2}$, each $\Delta_i$ contains $cm^{-3}\eps^{-2}\ge 2$ disjoint squares of side-length $\eps$. All together, these $n$ isosceles triangles contain $c'm^{-2}\eps^{-2}$ disjoint squares of side-length $\eps$ for some constant $c'$. We denote by $\cJ$ the class of these small squares.

Note that the base line of each isosceles triangles separates the isosceles triangle from the rest of $\Omega$.
If $f_0$ is a fixed function, and each $f_i$ is a function defined on $\Omega$ and supported on $\Delta_i$,
such that $f_0\pm f_i$ is convex, then for all choices of $\eps_i\in \{0,1\}$, $1\le i\le n$, the function
$g=f_0+\sum_{i=1}^n \eps_if_i$
is also convex on $\Omega$. Therefore, replace the class $\cI$ in $\S 2.9$ by the class $\cJ$ defined above, the same argument in $\S 2.9$ gives
$$\log N(\eps, \Omega,\|\cdot\|_p)\ge c'' \sqrt{|\cJ|}\ge c''m\eps^{-d/2}$$
for some constants $c''$ and $c'''$. This finishes the proof of the statement in Remark 1.\\

\noindent {\bf Acknowledgement.}
The authors thank the anonymous referees for their valuable comments resulting
in improvements to both the results and the presentation of the paper.

\providecommand{\bysame}{\leavevmode\hbox to3em{\hrulefill}\thinspace}
\providecommand{\MR}{\relax\ifhmode\unskip\space\fi MR }
\providecommand{\MRhref}[2]{%
  \href{http://www.ams.org/mathscinet-getitem?mr=#1}{#2}
}
\providecommand{\href}[2]{#2}



\end{document}